\newtheorem{thm}{Theorem}[section]
\newtheorem{lem}[thm]{Lemma}
\newtheorem{prop}[thm]{Proposition}
\theoremstyle{definition}
\theoremstyle{remark}
\numberwithin{equation}{section}
\newcommand{\partialt}{ \frac{\partial}{\partial t}}
\newcommand{\D} {{\bf{D}}}
\newcommand{\R} {\mathcal{R}}
\newcommand{\V} {\mathcal{V}}
\newcommand{\n}{\nabla}
\newcommand{\End}{{\rm End}}
\newcommand{\Spin}{{\rm Spin}}
\newcommand{\SO}{{\rm SO}}
\newcommand{\Aut}{{\rm Aut}}
\newcommand{\Ad}{{\rm Ad}}
\newcommand{\GL}{{\rm GL}}
\newcommand{\Vol}{{\rm Vol}}
\begin{document}
\title[regularity for the spinor flow]{Regularity estimates for the gradient flow of a spinorial energy functional}

\author{Fei He}
\author{Changliang Wang}
\email{Fei He, Xiamen University,  hefei@xmu.edu.cn}
\email{Changliang Wang, Max Planck Institute for Mathematics, Bonn, cwangmath@outlook.com}

\footnote{F.H. was partially supported by the Fundamental
Research Funds for the Central Universities Grant No. 20720180007 and by NSFC11801474.}

\begin{abstract}
In this note, we establish certain regularity estimates for the spinor flow introduced and initially studied in \cite{AWW2016}. Consequently, we obtain that the norm of the second order covariant derivative of the spinor field becoming unbounded is the only obstruction for long-time existence of the spinor flow. This generalizes the blow up criteria obtained in \cite{Sc2018} for surfaces to general dimensions. As another application of the estimates, we also obtain a lower bound for the existence time in terms of the initial data. Our estimates are based on an observation that, up to pulling back by a one-parameter family of diffeomorphisms, the metric part of the spinor flow is equivalent to a modified Ricci flow.
\end{abstract}

\maketitle

\noindent{{\it Mathematics Subject Classification} (2010):
53C27, 53C44, 53C25}

\bigskip
\setcounter{section}{0}

%%%%%%%%%%%%%%%%%%%%%%%%%%%%%%%%%%%%%%%%%%%%%%%%%%%%%%%%%%%%%%%%%%%%%%%%%%%%%%%%%%%%%%%%%%%%%%%%%%%%%%%%%%%%%%%%%%%%%%%%%%%%%%%%%%%%%%%%%%%%%%%%%%%%%%

\section{Introduction}
\noindent In \cite{AWW2016}, in order to study certain interesting special spinors, in particular parallel and Killing spinors, and related geometric structures, for example special holonomy metrics, from a variational point of view, Ammann, Weiss and Witt introduced the {\em spinorial energy functional} defined as
\begin{equation}\label{SpinorFunctional}
\begin{aligned}
\mathcal{E}: \mathcal{N} &\rightarrow \mathbb{R}_{\geq0},\\
 (g, \phi) & \mapsto \frac{1}{2}\int_M |\n^{g} \phi|^2_g dv_g,
\end{aligned}
\end{equation}
where $\mathcal{N}$ is the union of pairs $(g, \phi)$ of a Riemannian metric $g$ and a $g$-spinor of constant length one $\phi\in \Gamma(\Sigma_{g}M)$ over a closed spin manifold $M$.

In (\ref{SpinorFunctional}), $\n^{g}$ denotes the connection on the spinor bundle $\Sigma_{g}M$ induced by the Levi-Civita connection of $g$, and $|\cdot|_{g}$ the pointwise norm on $T^{*}M\otimes\Sigma_{g}M$ and $dv_{g}$ the volume form induced by $g$. For the simplicity of notations, in the following, we will omit the superscript and subscript $g$ in $\nabla^{g}$ and $|\cdot|_{g}$ once no ambiguity is caused.

Note that to compare spinor bundles induced by two different metric spin structures and then to calculate the variation of (\ref{SpinorFunctional}), one needs a connection on the Fr\'echet vector bundle $\mathcal{N}\rightarrow \mathcal{M}$, where $\mathcal{M}$ is the space of Riemnnian metrics on $M$. A natural one is the Bourguignon-Gauduchon (partial) connection introduced in \cite{BG1992} (also see \cite{AWW2016} for details). The evolution of the spinor $\phi$ is taken to be the vertical part with respect to this connection. In \cite{Wan91}, Wang studied the deformation of parallel spinors under variation of metrics in a different manner, where the spinor bundle is fixed and the variation of metrics reflects as a variation of spinor connections.

The variation formula of the functional (\ref{SpinorFunctional}) has been derived in \cite{AWW2016}. Consequently, in dimension of $M\geq 3$, they obtained that the critical points of (\ref{SpinorFunctional}) are metrics with parallel spinors, and metrics with Killing spinors are certain critical points of (\ref{SpinorFunctional}) subject to the constraint of fixed volume. Moreover, the negative gradient flow of (\ref{SpinorFunctional}) called the {\em spinor flow} is given by the coupled system
\begin{equation}\label{spinorFlow}
\begin{cases}
\begin{split}
\partialt g_{jk} & = \frac{1}{4}T_{ij} + \frac{1}{2}\langle \n_j \phi, \n_k \phi\rangle - \frac{1}{4}|\n \phi|^2 g_{jk}, \\
\partialt \phi & = \Delta \phi + |\n \phi|^2 \phi,
\end{split}
\end{cases}
\end{equation}
where
\begin{equation}\label{eqn: T tensor}
T_{jk} = g^{li}\nabla_{l} \tilde{T}_{ijk}
\end{equation}
is the divergence over the first variable of the following 3-tensor
\begin{equation}\label{eqn: T tilde tensor}
\tilde{T}(X,Y,Z) = \frac{1}{2} \langle X \wedge Y \cdot \phi, \nabla_{Z} \phi\rangle + \frac{1}{2}\langle X \wedge Z \cdot \phi, \nabla_{Y} \phi\rangle.
\end{equation}
%\[\tilde{T}_{ijk} = \frac{1}{2} \langle e_i \wedge e_j \cdot \phi, \nabla_{e_k} \phi\rangle + \frac{1}{2}\langle e_i \wedge e_k \cdot \phi, \nabla_{e_j} \phi\rangle.\]
It has been proved in \cite{AWW2016} that (\ref{spinorFlow}) is a weakly parabolic system whose degeneracy is caused by spin-diffeomorphism invariance, and the short-time existence of the initial value problem can be shown by pulling it back to a stricly parabolic system.  Behaviour of this flow on surfaces, on Berger spheres and on homogeneous spaces have been studied in \cite{AWW2016(2)}, \cite{Wit2016} and \cite{FSW2018} respectively, stability of the flow has been studied in \cite{Sc2017}.  Moreover,  a variational approach of investigating special metrics on 7-manifolds has already been carried out in an earlier work \cite{WW12}.

To study the long time behaviour of this flow, a fundamental question is to find a criteria for finite time singularities occurring. In has been proved in \cite{Sc2018} that on a closed 2-dimensional surface, if $|\n^2 \phi|$ stays uniformly bounded, then no finite time singularity will occur. We show that this result is true in general dimensions.

Our approach is to pull-back a solution of (\ref{spinorFlow}) by a family of diffeomorphisms to an equivalent system which looks like a modified Ricci flow coupled with an evolving spinor (see (\ref{spinorRicciFlow}) below). Then it is sufficient to do estimates for (\ref{spinorRicciFlow}). We obtain the following Bernstein type estimates in $L^2$-integral form.
\begin{thm}\label{l2Estimates}
Let $(M^n, g(t), \phi(t))$ be a solution of the spinor flow. For any constants $K>0$, $a>0$, suppose $|Rm|\leq K$, $|\n^2 \phi|\leq K$ and $|\n \phi|^2 \leq K$ on $B_{g(0)}(p, \frac{r}{\sqrt{K}})\times [0,\frac{a}{K}]$, then for any integer $k\geq 0 $ there exists a constant $C(n,a,k,r)$ such that
\begin{equation}
\fint_{B_\frac{r}{2\sqrt{K}}} |\n^k Rm|^2 dv(t)\leq \frac{CK^{2}}{t^k},
\end{equation}
\begin{equation}
\fint_{B_\frac{r}{2\sqrt{K}}} |\n^{2+k} \phi|^2 dv(t)\leq \frac{CK^{2}}{t^k},
\end{equation}
for $t \in(0,\frac{a}{K}]$. Here, $B_\frac{r}{2\sqrt{K}}$ denotes the geodesic ball centered at $p$ with radius $\frac{r}{2\sqrt{K}}$ with respect to the initial metric $g(0)$, $dv(t)$ denotes the volume form associated to the Riemannian metric $g(t)$, and as usual $\fint$ denotes the average, e.g.
$$\fint_{B_\frac{r}{2\sqrt{K}}} |\n^k Rm|^2 dv(t):=\left({\rm Vol}_{g(t)}\left(B_\frac{r}{2\sqrt{K}}\right)\right)^{-1}\int_{B_\frac{r}{2\sqrt{K}}} |\n^k Rm|^2 dv(t).$$
\end{thm}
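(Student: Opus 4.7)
The plan is to follow Shi's strategy for Ricci flow, adapted to the coupled system, with two preliminary reductions. First, by the diffeomorphism-pullback observation mentioned in the introduction, I replace $(g(t),\phi(t))$ by the solution of the modified Ricci flow system (\ref{spinorRicciFlow}). The scalar quantities $|\n^k Rm|^2$ and $|\n^{k+2}\phi|^2$, as well as geodesic balls and the volume form, are all diffeomorphism-invariant, so estimates for the pulled-back solution transfer verbatim. Since $|Rm|\leq K$ on the parabolic cylinder, a standard distance distortion / metric equivalence argument (as in Hamilton) shows that on the time interval $[0,a/K]$ the metrics $g(t)$ are uniformly equivalent to $g(0)$, so the ball $B_{g(0)}(p,r/\sqrt K)$ contains $B_{g(t)}(p,cr/\sqrt K)$ for a uniform $c=c(a)>0$, and volumes are comparable. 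This lets me work throughout with $g(t)$-intrinsic balls and switch back at the end.

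Second, I derive evolution inequalities of Bochner type. Denoting by $*$ a contraction with the metric, by a direct but lengthy calculation for (\ref{spinorRicciFlow}) one should get schematically
\begin{align*}
(\partial_t-\Delta)|\n^k Rm|^2 &\leq -2|\n^{k+1}Rm|^2 + \sum_{i+j\leq k}\n^i Rm*\n^j Rm*\n^k Rm \\
&\quad + \sum_{i+j\leq k+2} \n^i Rm*\n^j\phi*\n^{k+2}\phi*\n^k Rm,\\
(\partial_t-\Delta)|\n^{k+2}\phi|^2 &\leq -2|\n^{k+3}\phi|^2 + \sum \n^iRm*\n^j\phi*\n^{k+2}\phi \\
&\quad+ \sum \n^i\phi*\n^j\phi*\n^l\phi*\n^{k+2}\phi,
\end{align*}
where the sums run over indices whose total order of differentiation is controlled by $k+2$. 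The key structural fact is that only derivatives of order $\leq k+1$ of $Rm$ and $\leq k+3$ of $\phi$ appear on the right-hand side, and that each "top-order" appearance carries a factor that can be absorbed into the $-2|\n^{k+1}Rm|^2-2|\n^{k+3}\phi|^2$ via Cauchy-Schwarz.

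Third, I run a Shi-type induction on $k$. The base case $k=0$ is the hypothesis (after averaging and using the volume comparison). For the inductive step, choose a cutoff $\eta\in C_c^\infty(B_{g(0)}(p,r/\sqrt K))$ with $\eta\equiv 1$ on $B_{g(0)}(p,r/(2\sqrt K))$ and $|\n\eta|^2/\eta+|\n^2\eta|\leq CK/r^2$. Define the weighted quantity
\begin{equation*}
F_k(t)=\int_M \eta^{2N}\bigl(|\n^k Rm|^2+|\n^{k+2}\phi|^2\bigr)\,dv_{g(t)}
\end{equation*}
for a sufficiently large $N=N(k)$, and consider the Hamilton-Shi combination $G_k(t)=t^k F_k(t)+A_k\sum_{j<k}t^j F_j(t)$ with constants $A_k$ chosen so that $G_k'(t)$ can be bounded by the good negative terms at level $k+1$ minus a multiple of itself plus the induction data. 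Integration by parts with the cutoff produces boundary terms of the form $\int|\n\eta|^2|\n^k Rm|^2$ that can be absorbed into $\int\eta^{2N}|\n^{k+1}Rm|^2$ after another Cauchy-Schwarz, at the cost of a factor $K/r^2$. Integrating $G_k'(t)\leq CK^2\cdot K^k$ from $0$ to $t\leq a/K$ gives $F_k(t)\leq CK^2/t^k$, which yields the claimed estimate once divided by $\mathrm{Vol}_{g(t)}$ of the smaller ball.

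The main technical obstacle is bookkeeping in the evolution inequalities: unlike pure Ricci flow, the metric equation in (\ref{spinorFlow}) (and hence in (\ref{spinorRicciFlow})) contains second derivatives of $\phi$ through $T_{jk}$, so commuting $\n^k$ past $\partial_t$ on $Rm$ generates terms involving $\n^{k+2}\phi$, and symmetrically commuting $\n^{k+2}$ past $\partial_t$ on $\phi$ generates $\n^k Rm$. It is for this reason that $Rm$ and $\phi$ must be estimated simultaneously at the "matched" orders $k$ and $k+2$, and the challenge is to verify that at every order the top-derivative terms on the right of the Bochner inequalities really do appear only linearly (times factors bounded by the induction hypothesis), so that Cauchy-Schwarz closes the system without losing derivatives.
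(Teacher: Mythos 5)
Your overall strategy matches the paper's: pull back to the system (\ref{spinorRicciFlow}), form cutoff-weighted $L^2$ quantities pairing $|\n^k Rm|^2$ with $|\n^{k+2}\phi|^2$, and run a Shi-type induction on $k$ with $t$-weighted combinations. This is exactly the structure of the paper's Lemmas \ref{lemma: ddt of L2 integral of derivatives of Rm} and \ref{lemma: ddt of L2 integral of derivatives of phi} and the ensuing proof. (One cosmetic difference: the paper weights the two pieces as $F_i=\alpha\int\eta^m|\n^i Rm|^2+\int\eta^m|\n^{i+2}\phi|^2$ with $\alpha$ large, which is needed because the cross-terms in the two lemmas have mismatched constants; your $F_k$ without the weight would require a remark on how the absorption closes.)

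There is, however, a genuine gap in your evolution inequality for the spinor. The pulled-back spinor equation in (\ref{spinorRicciFlow}) contains the extra \emph{second-order} term $\frac{1}{32}\langle\phi,e_i\cdot\n_j\D\phi\rangle\,e_j\wedge e_i\cdot\phi$ beyond $\Delta\phi$. Differentiating $k+2$ times and pairing with $\n^{k+2}\phi$ therefore produces a term schematically of the form $\langle\phi,e_p\cdot\n^{k+2}\n_q\D\phi\rangle\langle\n^{k+2}\phi,e_q\wedge e_p\cdot\phi\rangle$, which involves $\n^{k+4}\phi$ — \emph{two} orders above the Laplacian's contribution, not one. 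Your schematic inequality $(\partial_t-\Delta)|\n^{k+2}\phi|^2\le -2|\n^{k+3}\phi|^2+(\text{terms of order}\le k+3)$ is therefore not correct as stated, and your closing paragraph misidentifies the difficulty: the issue is not that top-order terms "appear only linearly times bounded factors so Cauchy--Schwarz closes," but that a fourth-order term appears and naive Cauchy--Schwarz cannot absorb it into $-2|\n^{k+3}\phi|^2$. The paper handles this in the estimate of $II_2$ in Lemma \ref{lemma: ddt of L2 integral of derivatives of phi}: after commuting covariant derivatives past $\D$ and integrating by parts in $\n_q$, the dangerous term recombines into $-\frac{1}{32}\int\eta^m|\langle e_p\cdot\phi,\n^k\D\phi\rangle|^2$, which is a perfect square with a favorable sign, plus lower-order remainders. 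This algebraic cancellation is the essential content; without it the induction does not close, so your proposal needs to supply this step explicitly rather than subsume it under a schematic Bochner inequality.
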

Pointwise estimates then follow from standard embedding theorems. In view of the similarity of (\ref{spinorRicciFlow}) to the Ricci flow, we can adapt a method of \cite{KMW2016} to control the norm of the Riemann tensor in terms of the initial data and the first two derivatives of the spinor, see Lemma \ref{estimateOfRm} below. Hence as our first application of the estimates in Theorem \ref{l2Estimates}, we have
\begin{thm}\label{blowUpCondition}
Let $(g(t), \phi(t))$ be a solution to (\ref{spinorFlow}) or (\ref{spinorRicciFlow}) on a closed manifold $M$ and on time interval $[0, T)$, for some $T<\infty$, if $\sup\limits_{M\times[0,T)}|\n^2 \phi| < \infty$, then the flow can be extended to a larger time interval.
\end{thm}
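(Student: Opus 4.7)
The plan is to derive uniform smooth bounds on $(g(t), \phi(t))$ on $[0, T)$ from the assumed $C^2$ bound on the spinor, extract a smooth limit at $t = T$, and then apply the short-time existence result of \cite{AWW2016} to continue the flow past $T$. I will work with the pulled-back modified system (\ref{spinorRicciFlow}), since the hypothesis $\sup |\n^2 \phi| < \infty$ is invariant under the one-parameter family of spin-diffeomorphisms used to produce it, and all the preceding estimates are formulated for that system.

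The first step is to observe that the $C^2$-bound on $\phi$ automatically forces a $C^1$-bound. Since $|\phi|^2 \equiv 1$, differentiating twice gives $\langle \n_i \phi, \n_j \phi\rangle = -\langle \phi, \n^2_{ij} \phi\rangle$; tracing yields
\begin{equation*}
|\n \phi|^2 = -\langle \phi, \Delta \phi\rangle \leq \sqrt{n}\, \sup_{M\times[0,T)} |\n^2 \phi|,
\end{equation*}
so $|\n \phi|^2$ is uniformly bounded. Next, Lemma \ref{estimateOfRm} controls $|Rm|$ along (\ref{spinorRicciFlow}) in terms of the initial data, the elapsed time, and the first two covariant derivatives of $\phi$; together with the previous step this yields a uniform bound on $|Rm|$ on $M \times [0, T)$.

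With pointwise bounds on $|Rm|$, $|\n \phi|^2$, and $|\n^2 \phi|$ in hand, Theorem \ref{l2Estimates} applies at every base time on a finite cover of $M$ by $g(0)$-balls, producing $L^2$-integral bounds on $|\n^k Rm|$ and $|\n^{2+k} \phi|$ for every $k \geq 0$. Because the right-hand side of the $g$-equation is a polynomial in $\phi$, $\n \phi$, $\n^2 \phi$, and $Rm$, the quantity $|\partial_t g|_{g(t)}$ is uniformly bounded, so the metrics $g(t)$ stay mutually uniformly equivalent on $[0, T)$. This justifies invoking Sobolev embedding with respect to the fixed metric $g(0)$ to upgrade the integral bounds to pointwise $C^k$-bounds for every $k$. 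Time derivatives of $g$ and $\phi$ of all orders are then controlled by repeated differentiation of the evolution equations.

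These uniform smooth bounds let $(g(t), \phi(t))$ be extended smoothly to a pair $(g(T), \phi(T)) \in \mathcal{N}$, at which point the short-time existence theorem of \cite{AWW2016} yields a continuation, contradicting the maximality of $T$. I expect the main technical obstacle to be the $t \to T$ convergence at the level of spinors, since $\Sigma_{g(t)} M$ itself depends on $g(t)$. The standard remedy, following the Bourguignon--Gauduchon identification used throughout \cite{BG1992} and \cite{AWW2016}, is to transport each $\phi(t)$ to a fixed reference spinor bundle (say $\Sigma_{g(0)} M$) via the generalized cylinder construction and carry out the smooth-limit argument there; once this is in place, the extension step reduces to the usual limiting procedure for geometric flows.
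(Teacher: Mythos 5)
Your proposal follows the same route as the paper: derive a $C^1$ bound on $\phi$ from the assumed $C^2$ bound, invoke Lemma~\ref{estimateOfRm} to bound $|Rm|$, feed these into Theorem~\ref{l2Estimates} to get $L^2$ (and hence, via uniform equivalence of the metrics $g(t)$ and Sobolev embedding, pointwise) bounds on all higher derivatives, and finally transport the spinors along the Bourguignon--Gauduchon identification to a fixed spinor bundle in order to extract a smooth limit pair $(g(T),\phi(T))$ to which short-time existence applies. The one small departure is your pointwise derivation $|\n\phi|^2 = -\langle\phi,\Delta\phi\rangle \le \sqrt{n}\,|\n^2\phi|$, which gives a sharper, fully local replacement for the iterated integration-by-parts argument of Lemma~\ref{lemma: second-derivative-control-frist}; otherwise the argument is the paper's.
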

Note that in Theorem \ref{blowUpCondition} we do not need to assume uniform bounds for $|\n \phi|$ along the spinor flow, since it is controlled by $|\n^{2}\phi|$ by Lemma \ref{lemma: second-derivative-control-frist} below.

As another application of Theorem \ref{l2Estimates}, we also obtain a lower estimate for the existence time in terms of initial data.
\begin{thm}\label{lowerBoundForExistenceTime}
Let $g(t), \phi(t)$ be a solution of (\ref{spinorRicciFlow}) on a closed manifold $M^n$ with $n\geq 3$. Suppose $\sup\limits_M |Rm|_{g(0)} \leq L K$ and $\sup\limits_M |\n^2 \phi|_{g(0)} \leq K$, there are constants $\Lambda$ and $\delta$ depending on $n$, $L$ and
\[ \fint_M K^{-2-i}|\n^i Rm|^2 dv(g(0)), \quad \fint_M K^{-2-i}|\n^{2+i} \phi|^2 dv(g(0)),\]
where $ i = 0,1,2,..., [\frac{n}{2}]+1$, such that the existence time interval contains $[0, \frac{\delta}{K}]$, and we have
\[
 \sup_{M \times [0, \frac{\delta}{K}]}|\n^2 \phi| \leq \Lambda K .
\]
\end{thm}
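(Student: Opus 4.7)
The argument is a continuity (bootstrap) scheme built on Theorem~\ref{l2Estimates}, Lemmas~\ref{lemma: second-derivative-control-frist} and \ref{estimateOfRm}, and Theorem~\ref{blowUpCondition}. Under the parabolic rescaling $(g,t)\mapsto(Kg,Kt)$ we normalize $K=1$: the hypotheses then read $|Rm|_{g(0)}\leq L$, $|\nabla^{2}\phi|_{g(0)}\leq 1$, and the listed averages $\fint_{M}|\nabla^{i}Rm|^{2}$, $\fint_{M}|\nabla^{2+i}\phi|^{2}$ are bounded by a constant $A$ for $i\leq[n/2]+1$. Once existence on $[0,\delta]$ with $|\nabla^{2}\phi|\leq\Lambda$ is proved in this regime, reversing the rescaling yields the theorem.

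Fix $\Lambda>1$ to be determined and set
\begin{equation*}
T^{\ast}:=\sup\bigl\{\,t\in[0,T_{\max}):\ \textstyle\sup_{M\times[0,t]}|\nabla^{2}\phi|\leq\Lambda\,\bigr\}.
\end{equation*}
Continuity gives $T^{\ast}>0$. On $[0,T^{\ast})$, Lemma~\ref{lemma: second-derivative-control-frist} bounds $|\nabla\phi|^{2}$, and Lemma~\ref{estimateOfRm} bounds $|Rm|$, both in terms of $L$ and $\Lambda$; let $\bar K=\bar K(L,\Lambda)$ denote a joint upper bound. Theorem~\ref{l2Estimates} then applies at every $p\in M$, yielding, for each $k\in\{0,\ldots,[n/2]+1\}$ and every $t\in(0,T^{\ast})$,
\begin{equation*}
\fint_{B_{r/(2\sqrt{\bar K})}(p)}\bigl(|\nabla^{2+k}\phi|^{2}+|\nabla^{k}Rm|^{2}\bigr)\,dv(t)\leq\frac{C_{k}\,\bar K^{2}}{t^{k}}.
\end{equation*}

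To upgrade to an $L^{\infty}$ bound, observe that $|Rm|\leq\bar K$ plus the closedness of $M$ keep the injectivity radius bounded below on a short time interval, so a local Sobolev embedding is valid on balls of definite radius, converting $L^{2}$ bounds on $\nabla^{2+j}\phi$ for $j\leq[n/2]+1$ into an $L^{\infty}$ bound on $\nabla^{2}\phi$. Applied to the initial data alone, this gives $|\nabla^{2}\phi|(p,0)\leq\Phi(n,L,A)$. However, the bound delivered at $t>0$ purely by Theorem~\ref{l2Estimates} carries a $t^{-([n/2]+1)/2}$ weight that diverges at $t=0$ and thus cannot by itself close the bootstrap.

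\textbf{Main obstacle.} The crux is circumventing this near-$t=0$ degeneracy by propagating the initial $L^{2}$ Sobolev-size bounds forward in time, rather than invoking the higher-order estimates of Theorem~\ref{l2Estimates} directly. The natural strategy is to derive weight-free energy inequalities for $\partial_{t}\int|\nabla^{2+i}\phi|^{2}\,dv(t)$ along the spinor flow, in the spirit of (but without the $t^{-k}$ weights of) the estimates leading to Theorem~\ref{l2Estimates}, so that Gronwall produces
\begin{equation*}
\fint_{M}|\nabla^{2+i}\phi|^{2}\,dv(t)\leq 2A,\qquad i\leq[n/2]+1,
\end{equation*}
on a definite interval $[0,\delta]$ with $\delta=\delta(n,L,\Lambda,A)>0$. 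Local Sobolev embedding then yields a pointwise bound $|\nabla^{2}\phi|(p,t)\leq\Phi'(n,L,A,\Lambda)$ on $[0,\delta]$; by a fixed-point choice $\Lambda:=2\Phi'$ (or by showing $\Phi'\to\Phi$ as $\delta\to 0$), the strict inequality $|\nabla^{2}\phi|<\Lambda$ contradicts the maximality of $T^{\ast}$ unless $T^{\ast}\geq\delta$. Theorem~\ref{blowUpCondition} finally promotes this into genuine existence on $[0,\delta]$, and undoing the rescaling gives existence on $[0,\delta/K]$ with $|\nabla^{2}\phi|\leq\Lambda K$, as claimed.
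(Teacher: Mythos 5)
Your proposal follows essentially the same route as the paper: rescale to $K=1$, set up a maximal-time bootstrap with threshold $\Lambda$, invoke Lemmas~\ref{lemma: second-derivative-control-frist} and \ref{estimateOfRm} for a uniform curvature bound, and then --- the point you correctly flag as the crux --- avoid the $t^{-k}$ degeneracy of Theorem~\ref{l2Estimates} by using the weight-free differential inequalities for the $F_i$ quantities (the paper's (\ref{eqn: derivative of F_i})) and Gronwall, followed by the interpolation/multiplicative Sobolev inequalities of the appendix to upgrade to a pointwise bound on $|\nabla^2\phi|$. That is exactly the paper's proof, with the $P_k$, $P_{k-1}$, $F_0$ quantities giving the weight-free propagation you call ``the natural strategy.'' One caveat on your closing step: the ``fixed-point choice $\Lambda:=2\Phi'$'' is not well defined as stated, since $\Phi'$ depends on $\Lambda$; the paper instead first fixes $\Lambda$ from the $\delta\to 0$ limit of the right-hand side (which depends only on the initial averages, $n$, and $L$) and then picks $\delta>0$ so the Gronwall bound equals $\Lambda^2$ --- the second alternative you mention, and the one that actually closes the argument.
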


The rest of the note is organized as follows. In \S\ref{section: preliminaries on spin geometry}, we recall some basic facts in spin geometry that we may need in the note. In \S\ref{section: pull-back to Ricci flow}, we describe how to pull back the spinor flow system in (\ref{spinorFlow}) to an equivalent system in (\ref{spinorRicciFlow}), in which the metric evolving equation behaves similarly to the Ricci flow equation. Therefore, in \S\ref{section: Estimate of the Riemann tensor}, we are able to obtain $C^{0}$ estimate for Riemannian curvature tensor along the flow in (\ref{spinorRicciFlow}) by adapting the technique developed in \cite{KMW2016} for Ricci flow to this spinor flow. Furthermore, in \S\ref{section: l2 estimates of derivatives} we prove Theorem \ref{l2Estimates}. Then as applications of Theorem \ref{l2Estimates}, we prove Theorem \ref{blowUpCondition} in \S\ref{section: long time existence} and Theorem \ref{lowerBoundForExistenceTime} in \S\ref{section: lower bound estimate for the existence time}. Finally, in the appendix \S\ref{section: appendix} we recall some interpolation inequalities that we need in preceding sections.

%%%%%%%%%%%%%%%%%%%%%%%%%%%%%%%%%%%%%%%%%%%%%%%%%%%%%%%%%%%%%%%%%%%%%%%%%%%%%%%%%%%%%%%%%%%%%%%%%%%%%%%%%%%%%%%%%%%%%%%%%%%%%%%%%%%%%%%%%%%%%%%%%%

\section{Preliminaries on spin geometry}\label{section: preliminaries on spin geometry}
\noindent In this section, we will briefly review some basic facts on spin geometry. Throughout this note, $M^{n}$ will be a connected closed spin manifold of dimension $n\geq2$, and $\mathcal{M}$ is the space of Riemannian metrics on $M$.

\subsection{Metric spin structure and spinor bundle}
For more details about this subsection, we refer to e.g. \cite{BFGK91}, \cite{Fri00}, and \cite{LM89}.

Fix a Riemannian metric $g\in\mathcal{M}$ on $M$, let $P_{\SO}(g)$ denote the orthonormal frame bundle with respect to $g$. A {\em metric spin structure} with respect to $g$ is a spin$(n)$-principal bundle $P_{\Spin}$ over $M$ together with an equivariant two-sheeted covering map
$$\xi: P_{\Spin}\rightarrow P_{\SO}(g),$$
which restricts to a non-trivial double covering map $\rho: \Spin(n)\rightarrow \SO(n)$ on each fiber. This is said to be equivalent to another metric spin structure $\xi^{\prime}: P^{\prime}_{\Spin}\rightarrow P_{\SO}(g)$ with respect to $g$, if there exists a principal $\Spin(n)$-bundle isomorphism $\chi: P_{\Spin}\rightarrow P^{\prime}_{\Spin}$ such that there is the commutative diagram
$$
\xymatrix{
P_{\Spin}  \ar[rr]^{\chi} \ar[rd]_{\xi}
  & &  P^{\prime}_{\Spin} \ar[ld]^{\xi^{\prime}}\\
  &P_{\SO}(g).
}
$$

The {\em complex} Clifford algebra $\mathbb{C}l_{n}$ of the standard Euclidean vector space $(\mathbb{R}^{n}, g_{\mathbb{R}^{n}})$ may be classified as
\begin{equation}\label{Clifford-algebra-classification}
\mathbb{C}l_{n}\cong
\begin{cases}
\End({\Sigma_{n}}), & \text{if} \quad n=even,\\
\End({\Sigma_{n}})\oplus\End({\Sigma_{n}}), & \text{if} \quad n=odd,
\end{cases}
\end{equation}
where ``$\cong$" means algebra isomorphism, and $\Sigma_{n}$ is a complex vector space of complex dimension $2^{[n/2]}$. The spin group $\Spin(n)$ is a subgroup of the group of invertible elements in $\mathbb{C}l_{n}$. Thus the inclusion $\Spin(n)\hookrightarrow\mathbb{C}l_{n}$ together with the algebra isomorphism in (\ref{Clifford-algebra-classification}) gives {\em the spin representation} of the spin group $\Spin(n)$ on $\Sigma_{n}$, and which is referred as $\mu: \Spin(n)\rightarrow \End(\Sigma_{n})$. Then {\em the spinor bundle} is defined to be the associated bundle
$$
\Sigma_{g}M:=P_{\Spin}\times_{\mu}\Sigma_{n}=(P_{\Spin}\times\Sigma_{n})/\sim,
$$
where $(\tilde{e}, \tilde{\phi})\sim(\tilde{e}\cdot g, \mu(g^{-1})\tilde{\phi})$ for $g\in\Spin(n)$. A section of the spinor bundle $\Sigma_{g}M$ is called a {\em spinor (field)}.

The (complex) {\em Clifford bundle} $\mathbb{C}l(g)$ is referred to be the union of complex clifford algebras of $(T_{x}M, g(x))$ for all $x\in M$. This bundle may be viewed as an associated bundle as
$$
\mathbb{C}l(g)=P_{\Spin}\times_{\Ad}\mathbb{C}l_{n},
$$
where $\Ad: \Spin(n)\rightarrow \Aut(\mathbb{C}l_{n})$ is given by $\Ad(g)(\varphi)=g\varphi g^{-1}$ for $g\in\Spin(n)$ and $\varphi\in\mathbb{C}l_{n}$. By this characterization of $\mathbb{C}l(g)$, one can easily check that the Clifford bundle $\mathbb{C}l(g)$ acts on the spinor bundle $\Sigma_{g}M$. In particular, the tangent bundle $TM\subset\mathbb{C}l(g)$ acts on $\Sigma_{g}M$, i.e. a vector field $X$ acts on a spinor $\phi$, written as $X\cdot\phi$ and this is called {\em Clifford multiplication}. In terms of a local representation $[\tilde{e}, \tilde{\phi}]$ of $\phi$, where for an open set $U\subset M$, $\tilde{\phi}: U\rightarrow \Sigma_{n}$, and $\tilde{e}: U\rightarrow P_{\Spin}$ covers a local orthonormal frame $e=(e_{1}, \cdots, e_{n}): U\rightarrow P_{\SO}(g)$, we have
\begin{equation*}
X\cdot\phi=\sum^{n}_{i=1}\langle X, e_{i}\rangle e_{i}\cdot\phi=\sum^{n}_{i=1}\langle X, e_{i}\rangle[\tilde{e}, E_{i}\cdot\tilde{\phi}],
\end{equation*}
where $\{E_{1}, \cdots, E_{n}\}$ is the standard basis of $\mathbb{R}^{n}$, and $E_{i}\cdot\tilde{\phi}$ is the Clifford multiplication induced by the algebra isomorphism in (\ref{Clifford-algebra-classification}).

The Clifford multiplication naturally extends to actions of tensor algebras. For example, for any $p$-form $\alpha$,
\begin{equation*}
\alpha\cdot\phi:=\sum_{1\leq i_{1}\leq \cdots \leq i_{q}\leq n}\alpha(e_{i_{1}}, \cdots, e_{i_{p}})e_{i_{1}}\cdot \dots \cdot e_{i_{p}}\cdot \phi.
\end{equation*}
In particular,
\begin{equation*}
(X\wedge Y)\cdot\phi=X\cdot Y\cdot\phi+g(X, Y)\phi.
\end{equation*}
Here we identify $TM$ and $T^{*}M$ by using the Riemannian metric $g$.

The Levi-Civita connection on the orthonormal frame bundle $P_{\SO}(g)$ naturally induces a connection on $P_{\Spin}$ and further on $\Sigma_{g}M$. In a local expression as above, the spinor covariant derivative of  $\phi$ in the direction of a vector field $X$ can be written as
\[
\nabla_X \phi = [\tilde{e}, X\tilde{\phi} + \frac{1}{4}\sum^{n}_{i,j=1} g(\nabla_X e_i, e_j) E_i \cdot E_j \cdot \tilde{\phi}].
\]
In other words,
\[ \nabla_X \phi =X(\phi) + \frac{1}{4} g(\nabla_X e_i ,e_j ) e_i \cdot e_j \cdot \phi.\]
Here and also in the following the Einstein summation convention is used.

Define the spinor curvature operator as
\[\R(X, Y) \phi := \nabla^{2}_{X, Y} \phi - \nabla^{2}_{Y, X} \phi=\nabla_X (\nabla_Y \phi) - \nabla_Y(\nabla_X \phi) - \nabla_{[X,Y]}\phi.\]
By direct calculation
\[ \R(e_i, e_j)\phi = \frac{1}{4}R_{ijkl}e_l \cdot e_k \cdot \phi.\]
Using the first Bianchi identity one can derive a formula for the Ricci curvature as
\[ e_i \cdot \R(e_j, e_i)\phi = -\frac{1}{2} R_{jk} e_k \cdot \phi. \]

Recall that there is an inner product on the spinor bundle $\Sigma_{g}M$ which is invariant under Clifford multiplication by unit vectors and compatible with the spinor covariant differentiation, i.e.
\[ \langle X\cdot \phi, X\cdot \psi\rangle = \langle \phi, \psi\rangle, \quad \text{if} \quad |X|=1, \]
\[
X \langle \phi, \psi\rangle = \langle \n_X \phi, \psi\rangle + \langle \phi, \n_X \psi \rangle .
\]
Using this inner product we get a formula for the Ricci tensor
\begin{equation}\label{eqn: Ricci-derivative-of-spinor}
R_{jk} = -2 \langle e_i \cdot \R(e_j, e_i) \phi, e_k \cdot \phi \rangle, \quad \text{if} \quad |\phi|=1.
\end{equation}
Hence the Ricci curvature tensor can be completely determined by the second covariant derivative of any unit spinor field, we will keep this fact in mind in the following of this note.

\subsection{Topological spin structure and universal spinor bundle} In order to compare spinors with respect to different Riemannian metrics, one needs to gather all spinor bundles $\Sigma_{g}M$ for all $g\in\mathcal{M}$ together into a single bundle, which will be the so called universal spinor bundle. For defining a universal spinor bundle, one needs a topological spin structure without referring to any specific Riemannian metric. For more details about these materials, see e.g. \cite{AWW2016}, \cite{BG1992}, and \cite{Swi93}.

Fix an orientation for the manifold $M^{n}$. Let $P_{\GL^{+}}$ be the oriented frame bundle over $M$, which is a principal $\GL(n)^{+}$-bundle over $M$. Then a {\em topological spin structure} is a principal $\widetilde{\GL}(n)^{+}$-bundle $P_{\widetilde{\GL}^{+}}$ together with an equivariant two-sheeted covering map
\begin{equation*}
\theta: P_{\widetilde{\GL}^{+}}\rightarrow P_{\GL^{+}},
\end{equation*}
which restricts to a non-trivial double covering map $\widetilde{\GL}(n)^{+}\rightarrow \GL(n)^{+}$ on each fiber. This is said to be equivalent to another topological spin structure $\theta^{\prime}: P^{\prime}_{\widetilde{\GL}^{+}}\rightarrow P_{\GL^{+}}$ if there exists an equivariant principal $\widetilde{\GL}(n)^{+}$-bundle isomorphism $\chi: P_{\widetilde{\GL}^{+}}\rightarrow P^{\prime}_{\widetilde{\GL}^{+}}$ such that there is the commutative diagram
$$
\xymatrix{
P_{\widetilde{\GL}^{+}} \ar[rr]^{\chi} \ar[rd]_{\theta}
     & &   P^{\prime}_{\widetilde{\GL}^{+}} \ar[ld]^{\theta^{\prime}}\\
     & P_{\GL^{+}}.
}
$$

It was shown in \cite{Swi93} that for any fixed metric $g$ the equivalent metric spin structures with respect to $g$ one-to-one correspond to equivalent topological spin structures.

The bundle of positive definite bilinear forms can be viewed as the associated bundle
\begin{equation*}
\odot^{2}_{+}T^{*}M=P_{\GL^{+}}\times_{p}(\GL(n)^{+}/\SO(n))=P_{\GL}/\SO(n),
\end{equation*}
where $p$ is the natural left action of $\GL(n)^{+}$ on the quotient space $\GL(n)^{+}/\SO(n)$. Now we choose and fix a topological spin structure $\theta: P_{\widetilde{\GL}^{+}}\rightarrow P_{\GL^{+}}$ throughout the rest of the notes. Then the bundle $\odot^{2}_{+}T^{*}M$ can also be viewed as
\begin{eqnarray*}
\odot^{2}_{+}T^{*}M
&=& P_{\widetilde{\GL}^{+}}\times_{\widetilde{p}}(\GL(n)^{+}/\SO(n))\cr
&=& P_{\widetilde{\GL}^{+}}\times_{\widetilde{p}^{\prime}}(\widetilde{\GL}(n)^{+}/\Spin(n))\cr
&=& P_{\widetilde{\GL}^{+}}/\Spin(n),
\end{eqnarray*}
where $\widetilde{p}$ and $\widetilde{p}^{\prime}$ are natural left actions of $\widetilde{\GL}(n)^{+}$ on $\GL(n)^{+}/\SO(n)$ and $\widetilde{\GL}(n)^{+}/\Spin(n)$, respectively.

Thus the projection $P_{\widetilde{\GL}^{+}}\rightarrow \odot^{2}_{+}T^{*}M$ is a principal $\Spin(n)$-bundle. The {\em universal spinor bundle} is then defined as the associated vector bundle
\begin{equation*}
\pi: \Sigma M:=P_{\widetilde{\GL}^{+}}\times_{\mu}\Sigma_{n}\rightarrow \odot^{2}_{+}T^{*}M.
\end{equation*}
By composing $\pi$ with the projection of the bundle projection $\odot^{2}_{+}T^{*}M\rightarrow M$, one can also view $\Sigma M$ as a fiber bundle over $M$ with fiber $(\widetilde{\GL}^{+}\times\Sigma_{n})/\Spin(n)$. Then a section $\Phi\in \Gamma(\Sigma M)$ of this bundle over $M$ determines a Riemannian metric $g_{\Phi}$ and a spinor $\phi_{\Phi}\in \Gamma(\Sigma_{g_{\Phi}}M)$ and vice versa. Therefore, we identify sections of the universal bundle $\Sigma M\rightarrow M$ with the corresponding pairs $(g, \phi)$.

The Bourguignon-Gauduchon (partial) connection (or horizontal distribution) in \cite{BG1992}, or equivalently generalized cylinder construction in \cite{BGM05}, produces the decomposition
\begin{equation}\label{eqn: BG-connection-decomposition}
T_{(g, \phi)}\Sigma M\cong \odot^{2}_{+}T^{*}_{x}M\oplus \Sigma_{g,x}M,
\end{equation}
where $(g, \phi)\in \Sigma M$ has base-point $x\in M$, and $\Sigma_{g, x}M$ is the fiber of the spinor bundle $\Sigma_{g}M$ with respect to the metric $g$ over $x\in M$. In the decomposition (\ref{eqn: BG-connection-decomposition}), the first factor is the horizontal part and the second factor the vertical part. For details of the decomposition (\ref{eqn: BG-connection-decomposition}), we also refer to \cite{AWW2016} and references therein.

The universal bundle of unit spinors $S(\Sigma M)$ is given by
\begin{equation*}
S(\Sigma M):=\{(g, \phi)\in \{g\}\times \Sigma_{g}M\subset \Sigma M \quad \mid \quad |\phi|_{g}=1\}\subset \Sigma M.
\end{equation*}
As in \cite{AWW2016}, let $\mathcal{F}$ and $\mathcal{N}$ denote respectively the spaces of smooth sections
\begin{equation*}
\mathcal{F}:=\Gamma(\Sigma M) \quad \text{and} \quad \mathcal{N}:=\Gamma(S(\Sigma M)).
\end{equation*}
They can been considered as Fr\'echet fiber bundles over $\mathcal{M}$. Then it follows from the decomposition (\ref{eqn: BG-connection-decomposition}) that
\begin{equation*}
T_{(g, \phi)}=\mathcal{H}_{(g, \phi)}\oplus T_{(g, \phi)}\mathcal{F}_{g}\cong \Gamma(\odot^{2}T^{*}M)\oplus \Gamma(\Sigma_{g}M),
\end{equation*}
and
\begin{equation*}
\begin{aligned}
T_{(g, \phi)}\mathcal{N}
&=\Gamma(\odot^{2}T^{*}M)\oplus \Gamma(\phi^{\perp})\\
&=\{(h, \varphi)\in \Gamma(\odot^{2}T^{*}M)\oplus \Gamma(\Sigma_{g}M) \mid \langle\varphi(x), \phi(x)\rangle=0, \forall x\in M\}.
\end{aligned}
\end{equation*}
Again the first factor in the decomposition is the horizontal part, and the second factor the vertical part. As mentioned in Introduction, in the spinor evolving equation in the spinor flow system, $\frac{\partial}{\partial t}\phi$ takes values in the vertical part.

%%%%%%%%%%%%%%%%%%%%%%%%%%%%%%%%%%%%%%%%%%%%%%%%%%%%%%%%%%%%%%%%%%%%%%%%%%%%%%%%%%%%%%%%%%%%%%%%%%%%%%%%%%%%%%%%%%%%%%%%%%%%%%%%%%%%%%%%%%%%%%%%%%%%%

\section{Pull-back to a modified Ricci flow}\label{section: pull-back to Ricci flow}
\noindent In this section, we will pull-back the spinor flow (\ref{spinorFlow}) to a modified Ricci flow coupled with an evolving spinor as in (\ref{spinorRicciFlow}). This helps us eliminate the term involving the second order derivative of spinor on the right hand side of the metric evolving equation in (\ref{spinorFlow}), and so that we can apply the technique developed in \cite{KMW2016} to do $C^{0}$-estimate for Riemann tensor in \S\ref{section: Estimate of the Riemann tensor}.

For simplicity we denote the covariant derivative in the direction of $e_i$ as $\n_i$. Using the fact that $|\phi| \equiv 1$ we have (recall tensors $\tilde{T}$ and $T$ defined in (\ref{eqn: T tilde tensor}) and (\ref{eqn: T tensor}), respectively,)
\[\tilde{T}_{ijk} = \frac{1}{2}\langle e_i \cdot \phi, e_j \cdot \nabla_{k} \phi + e_k \cdot \nabla_{j} \phi \rangle. \]
Hence for any vectors $X$ and $Y$, we can write $T$ as
\begin{eqnarray*}
T(X,Y)
& = & \frac{1}{2} \langle e_i \wedge X \cdot \nabla_{i} \phi, \nabla_{Y}\phi \rangle  + \frac{1}{2} \langle e_i \wedge Y \cdot \nabla_{i} \phi, \nabla_{X}\phi \rangle \\
&   & +\, \frac{1}{2} \langle e_i \wedge X \cdot \phi, \nabla _{i} \nabla_{Y}\phi \rangle+ \frac{1}{2} \langle e_i \wedge Y \cdot \phi, \nabla _{i} \nabla_{X}\phi \rangle.
 \end{eqnarray*}
Let ${\D}$ be the Dirac operator defined by
\[\D \phi := e_i \cdot \nabla_{i} \phi.\]
\begin{lem}\label{newFormulaForT}
For any tangent vectors $X$ and $Y$, we have
\begin{eqnarray*}
 T(X,Y)
 & = & -\, \frac{1}{2}Ric(X,Y) - 2\langle \nabla_X \phi, \nabla_Y \phi\rangle \\
 &   & +\, \frac{1}{2}\langle \D \phi, X \cdot \nabla_{Y}\phi + Y \cdot \nabla_{X}\phi \rangle\\
 &   & +\, \frac{1}{2}\langle \phi, X\cdot \nabla_Y \D \phi + Y \cdot \nabla_X \D \phi\rangle.
\end{eqnarray*}
\end{lem}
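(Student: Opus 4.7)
The strategy is purely algebraic: expand the wedge using $(e_i \wedge X)\cdot \psi = e_i \cdot X \cdot \psi + g(e_i, X)\psi$ in each of the four summands, collapse the sums over $i$ via $\sum_i g(e_i, X)\nabla_{e_i} = \nabla_X$ and $\sum_i e_i \cdot \nabla_{e_i} = \D$, and then apply (i) the skew-adjointness $\langle X\cdot \psi, \eta\rangle = -\langle \psi, X \cdot \eta\rangle$ of Clifford multiplication by a real vector, (ii) the Clifford relation $e_i \cdot X + X \cdot e_i = -2 g(e_i, X)$, and (iii) the unit-length constraint $|\phi|\equiv 1$. Since the identity is tensorial, one may fix a point $p$, work in a synchronous orthonormal frame at $p$, and extend $X, Y$ to be parallel at $p$; this kills all Christoffel and bracket contributions along the way.

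For the first pair of summands in $T(X,Y)$, rewriting $e_i \cdot X = -X \cdot e_i - 2 g(e_i, X)$ and using skew-adjointness turns $\sum_i \langle e_i \cdot X \cdot \nabla_i \phi, \nabla_Y\phi\rangle$ into $\langle \D\phi, X \cdot \nabla_Y \phi\rangle - 2\langle \nabla_X\phi, \nabla_Y \phi\rangle$; combining with the $g(e_i, X)\psi$ remainder and the $X\leftrightarrow Y$ companion yields $\tfrac{1}{2}\langle \D\phi, X\cdot \nabla_Y \phi + Y\cdot \nabla_X\phi\rangle - \langle \nabla_X\phi, \nabla_Y\phi\rangle$. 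For the second pair of summands, the analogous expansion followed by skew-adjointness produces $-\tfrac12\langle X\cdot \phi, \D\nabla_Y\phi\rangle + \tfrac12\langle \phi, \nabla_X\nabla_Y\phi\rangle$ together with its $X\leftrightarrow Y$ companion. Here I invoke the spinorial commutation identity
\[
\D\nabla_Y \phi - \nabla_Y \D\phi \;=\; \tfrac{1}{2}\,\mathrm{Ric}(Y)\cdot \phi,
\]
which is an immediate consequence of the formula $\sum_i e_i \cdot \R(e_j, e_i)\phi = -\tfrac12 R_{jk}\, e_k \cdot \phi$ recalled in \S\ref{section: preliminaries on spin geometry}. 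Substituting and applying skew-adjointness once more converts the $\D\nabla_Y\phi$ piece into the desired $\tfrac{1}{2}\langle \phi, X\cdot \nabla_Y \D\phi + Y \cdot \nabla_X \D\phi\rangle$, while the Ricci correction contributes $-\tfrac12 \mathrm{Ric}(X, Y)$ after using the unit-spinor identity $\mathrm{Re}\langle X\cdot\phi, Z\cdot\phi\rangle = g(X, Z)|\phi|^{2}$ with $Z = \mathrm{Ric}(Y)$.

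The residual scalar terms $\langle \phi, \nabla_X\nabla_Y\phi\rangle$ and $\langle \phi, \nabla_Y\nabla_X\phi\rangle$ are handled by twice-differentiating $|\phi|^{2} \equiv 1$, which yields $\langle \phi, \nabla_X\nabla_Y \phi\rangle = -\langle \nabla_X\phi, \nabla_Y\phi\rangle$; this supplies the second copy of $-\langle \nabla_X\phi, \nabla_Y\phi\rangle$ needed to produce the overall coefficient $-2$. Assembling the four contributions reproduces the stated identity. The main obstacle is careful sign bookkeeping across the Clifford anti-commutator and the skew-adjointness, together with the implicit understanding that $T(X,Y)$ is real-valued so the inner products appearing above are to be read as their real parts; no individual step is computationally deep.
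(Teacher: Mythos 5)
Your proof is correct and follows essentially the same route as the paper's: expand the wedge via $(X\wedge Y)\cdot\phi = X\cdot Y\cdot\phi + g(X,Y)\phi$, collapse the $e_i$-sums to introduce the Dirac operator, apply skew-adjointness of Clifford multiplication and the unit-spinor identities $\langle\phi,\nabla_X\nabla_Y\phi\rangle = -\langle\nabla_X\phi,\nabla_Y\phi\rangle$ and $\mathrm{Re}\langle\phi,e_j\cdot e_l\cdot\phi\rangle = -\delta_{jl}$, and use the commutation $\D\nabla_Y\phi = \nabla_Y\D\phi + \tfrac{1}{2}\mathrm{Ric}(Y)\cdot\phi$ to generate the Ricci term. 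The paper just organizes the same algebra in index notation and does not spell out the Clifford-relation manipulations as explicitly, but the two arguments are the same.
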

\begin{proof}
For simplicity we use the standard trick to choose an orthonormal frame $e_i$, $i=1,2,...,n$, and calculate at a point where $\n_i e_j = 0$.
First group $e_i \cdot$ and $\n_i$ whenever possible to introduce the Dirac operator to the expression of
\begin{eqnarray*}
T_{jk}
& = & \frac{1}{2}\langle \D \phi, e_j \cdot \nabla_{k}\phi + e_k \cdot \nabla_{j}\phi \rangle  - \langle \nabla_{j}\phi, \nabla_{k}\phi\rangle\\
&   & +\, \frac{1}{2} \langle \phi, e_j \cdot \D \nabla_{k} \phi +e_k \cdot \D \nabla_{j} \phi+  \nabla_{j}\nabla_{k}\phi + \nabla_{k} \nabla_{j}\phi \rangle. \\
\end{eqnarray*}
Differentiating $|\phi| \equiv 1$ twice yields
\[
\langle \n_j \phi, \n_k \phi\rangle = - \langle \phi, \n_j\n_k \phi\rangle.
\]
Hence
\begin{eqnarray*}
 T_{jk}
 & = & \frac{1}{2}\langle \D \phi, e_j \cdot \nabla_{k}\phi + e_k \cdot \nabla_{j}\phi \rangle + \frac{1}{2} \langle \phi,  e_j  \D \nabla_{k} \phi +e_k  \D \nabla_{j} \phi) \rangle \\
 &   & -\, 2 \langle \n_j \phi, \n_k \phi\rangle.
\end{eqnarray*}
Note that
%\[
% \D e_k \cdot \phi  = e_i \cdot \nabla_i (e_k \cdot \phi) = -e_k \cdot \D \phi - 2 \nabla_k \phi,
%\]
\[
\D \nabla_k \phi = e_i \cdot \nabla_i \nabla_k \phi = e_i \cdot (\nabla_k\nabla_i \phi + \R(e_i, e_k )\phi) = \nabla_k \D \phi + \frac{1}{2}R_{kl} e_l \cdot \phi.
\]
We can rewrite the formula of $T$ as
\begin{eqnarray*}
 T_{jk}
 & = & \frac{1}{2}\langle \D \phi, e_j \cdot \nabla_{k}\phi + e_k \cdot \nabla_{j}\phi \rangle + \frac{1}{2}\langle \phi, e_j\nabla_k \D \phi + e_k \nabla_j \D \phi\rangle \\
&    & +\,  \frac{1}{2} \langle \phi, \frac{1}{2} R_{kl} e_j \cdot e_l \cdot \phi + \frac{1}{2} R_{jl} e_k \cdot e_l \cdot \phi \rangle - 2 \langle \nabla_j \phi, \nabla_k \phi \rangle \\
&  = & \frac{1}{2}\langle \D \phi, e_j \cdot \nabla_{k}\phi + e_k \cdot \nabla_{j}\phi \rangle + \frac{1}{2}\langle \phi, e_j\nabla_k \D \phi + e_k \nabla_j \D \phi\rangle \\
&    & -\, \frac{1}{2}R_{jk} - 2\langle \nabla_j \phi, \nabla_k \phi\rangle
\end{eqnarray*}

\end{proof}
%Observe that if $\phi$ is an eigenspinor of the Dirac operator
%\[\D \phi = \lambda \phi. \]
%Then
%\[T_{jk} = \lambda \langle \phi, e_j \cdot \nabla_k \phi + e_k \cdot \nabla_j \phi\rangle - \frac{1}{2}R_{jk} - 2 \langle \n_j \phi, \n_k \phi \rangle \]
%where the second order derivatives of the metric are only in the Ricci tensor.

Now define a vector field
\[X := \langle \phi, e_i \cdot \D \phi\rangle e_i.\]
Let $F(t)$ be the 1-parameter family of diffeomorphism generated by $X$, i.e.

\begin{align}
\begin{cases}
\frac{d}{dt}F(t) &= -\frac{1}{8} X,\\
F(0) &= id.
\end{cases}
\end{align}

$F(t)$ is a family of spin diffeomorphisms since they are isotopic to the identity.
Suppose $(g(t), \phi(t))$ is a solution to (\ref{spinorFlow}), pull-back $g(t),\phi(t)$ by $F(t)$, for simplicity we denote $F^*g$ and $F^*\phi$ still by $g$ and $\phi$. By the formula for the Lie derivative and the {\em metric Lie derivative} obtained in Proposition 17 in \cite{BG1992} (also see (8) in \cite{AWW2016})
\begin{eqnarray*}
L_X g_{ij}
& =  & -\, \langle e_i \cdot \nabla_j \phi + e_j \cdot \nabla_i \phi,   \D \phi\rangle\\
&    & +\, \langle \phi, e_i \cdot \nabla_j \D\phi+  e_j \cdot \nabla_i \D\phi\rangle,
\end{eqnarray*}

\begin{eqnarray*}
\tilde{L}_X \phi
& = & \nabla_X \phi - \frac{1}{4} dX^b \cdot \phi \\
&= & \langle \phi, e_i \cdot \D \phi\rangle \nabla_i \phi  - \frac{1}{4} (\langle \nabla_j \phi, e_i \cdot \D \phi\rangle\\
&  & +\, \langle \phi, e_i \cdot \nabla_j \D \phi\rangle ) e_j \wedge e_i \cdot \phi,
\end{eqnarray*}
we have
\begin{equation}\label{spinorRicciFlow}
\begin{cases}
\begin{split}
\partialt g_{jk} = & -\frac{1}{8}R_{jk} - \frac{1}{4}|\nabla \phi|^2 g_{jk}\\
&+  \frac{1}{4}\langle \D \phi, e_j \cdot \nabla_{e_k}\phi + e_k \cdot \nabla_{e_j}\phi \rangle, \\
\partialt \phi = & \Delta \phi + |\nabla \phi|^2 \phi -\frac{1}{8} \langle \phi, e_i \cdot \D \phi\rangle \nabla_i \phi \\
& + \frac{1}{32} (\langle \nabla_j \phi, e_i \cdot \D \phi\rangle  + \langle \phi, e_i \cdot \nabla_j \D \phi\rangle ) e_j \wedge e_i \cdot \phi.
\end{split}
\end{cases}
\end{equation}

Hence by pulling back the flow (\ref{spinorFlow}) by a 1-parameter family of diffeomorphisms we simplified the evolution equation of the Riemmanian metric, at the expense of complicating the evolution equation of the spinor field by introducing a new second order term
\[\frac{1}{32} \langle \phi, e_i \cdot \nabla_j \D \phi\rangle e_j \wedge e_i \cdot \phi. \]
%(We need to pull back to this flow in order to cancel the second order derivative of $\phi$ in the evolution of metric, then we can use the method of \cite{KMW2016} to estimate $|Rm|$.)

Recall that under a smooth deformation of the metric $\partialt g = h$, we have
\[
\partialt R_{ijlk} = \frac{1}{2} (\nabla_i \nabla_k h_{jl} + \nabla_j \nabla_l h_{ik} - \nabla_i \nabla_l h_{jk} - \nabla_j \nabla_k h_{il} + R_{ijpk}h_{pl} + R_{ijlp}h_{kp}).
\]
Denote
\[
\V(h)_{ijlk}  := \nabla_i \nabla_k h_{jl} + \nabla_j \nabla_l h_{ik} - \nabla_i \nabla_l h_{jk} - \nabla_j \nabla_k h_{il}
\]
%Denote
%\[
%E_{ij} = \langle \D\phi, e_i \cdot \nabla_j \phi + e_j \cdot \nabla_i \phi\rangle.
%\]
Under the flow (\ref{spinorRicciFlow}) we have
\begin{equation}\label{timeDerivativeOfRm}
\begin{split}
\partialt Rm = &-\frac{1}{16}\V(Ric) + \frac{1}{8} Rm(Ric) + Rm * \nabla \phi *\nabla \phi\\
 &+ \nabla^3 \phi * \nabla\phi + \nabla^2 \phi * \nabla^2 \phi .
\end{split}
\end{equation}
Here and in the rest of the notes, ``$*$" is allowed to involve three types of operations: contraction by the metric $g$, clifford multiplication by unit vectors, and contraction by the spinor inner product.

By the second Bianchi identity we can derive the heat-type evolution equation of the Riemann curvature tensor.
\begin{equation}\label{evolutionEquationOfRm}
\begin{split}
\partialt Rm =
&\frac{1}{16} \Delta Rm + Rm * Rm + Rm * \n \phi * \n \phi\\
& + \nabla^3 \phi * \nabla \phi+ \nabla^2 \phi * \nabla^2 \phi.
\end{split}
\end{equation}
%Or
%\[
%\partialt Rm = \frac{1}{16} \Delta Rm + Q(Rm) - \frac{1}{4}\nabla^2 |\nabla \phi|^2 \KN g + \frac{1}{4} \V(E)
%\]
After tracing we have the evolution equation of the Ricci tensor
\[
\partialt Ric = \frac{1}{16}\Delta Ric + Rm * Ric +  Rm * \nabla \phi *\nabla \phi + \nabla^3 \phi * \nabla\phi + \nabla^2 \phi * \nabla^2 \phi .
\]

For covariant derivatives of the Riemann tensor we have evolution equations
\begin{equation}\label{evolutionEquationForDerivativesOfRm}
\begin{split}
(\partialt - \frac{1}{16}\Delta)\n^k Rm
= &  \sum_{l=0}^k \sum_{r=0}^l \n^{1+r} \phi * \n^{1+l-r}\phi * \n^{k-l} Rm \\
&  +  \sum_{l=0}^k \n^l Rm * \n^{k-l} Rm
+\sum_{l=0}^{k+2}\n^{1+l}\phi * \n^{3+k-l} \phi ,
\end{split}
\end{equation}
where $\n^k$ denote the $k$-th covariant derivative.
For the spinor we have
\begin{eqnarray*}
(\partialt - \Delta) \nabla^k \phi
& = & \frac{1}{32} \langle \phi, e_p \cdot  \nabla^k \nabla_q \D \phi\rangle e_q \wedge e_p \cdot \phi \cr
&   & + \sum_{p=0}^k \sum_{q=0}^{k+1-p} \nabla^{1+p}\phi * \nabla^{q}\phi * \nabla^{k+1-p-q}\phi \cr
&   & + \sum_{0\leq l \leq k-1} \nabla^l Rm * \nabla^{k-l}\phi + \sum_{1\leq l \leq k } \nabla^l Ric * \nabla ^{k-l} \phi.
\end{eqnarray*}

%%%%%%%%%%%%%%%%%%%%%%%%%%%%%%%%%%%%%%%%%%%%%%%%%%%%%%%%%%%%%%%%%%%%%%%%%%%%%%%%%%%%%%%%%%%%%%%%%%%%%%%%%%%%%%%%%%%%%%%%%%%%%%%%%%%%%%%%%%%%%%%%%%%%%

\section{Estimates of the Riemann tensor}\label{section: Estimate of the Riemann tensor}
\noindent In this section we show that the first two derivative of $\phi$ controls the growth of $|Rm|$ along the flow (\ref{spinorRicciFlow}). Since this is trivial in dimension $\leq 3$, we can assume here the dimension is $n\geq 4$. We use the method of \cite{KMW2016}.
\begin{lem}\label{estimateOfRm}
Let $(g(t), \phi(t))$ be a solution of (\ref{spinorRicciFlow}) with $|\n \phi|^2, |\n^2 \phi| \leq K$ on $B(r)\times [0,T]$, where we can take $B(r)$ to be a geodesic ball with respect to $g(0)$ with radius $r$. Then we have
\[ \sup_{B(r/2)\times[0,T]}|Rm| \leq C(n,K, r^{-1}, T, \sup_{B(r)}|Rm|(g(0))).\]
\end{lem}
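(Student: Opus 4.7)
The plan is to adapt the local maximum-principle argument of Kotschwar--Munteanu--Wang \cite{KMW2016} to our modified flow (\ref{spinorRicciFlow}). The crucial starting observation is that the pointwise identity (\ref{eqn: Ricci-derivative-of-spinor}) turns the hypothesis $|\n^2\phi|\leq K$ into the pointwise bound $|Ric|\leq CK$ on $B(r)\times[0,T]$. Combined with $|\n\phi|^2\leq K$, the metric equation in (\ref{spinorRicciFlow}) gives $|\partial_t g|_{g}\leq CK$, so the metrics $g(t)$ are uniformly equivalent to $g(0)$ on $[0,T]$, with ratio depending only on $KT$. In particular all geodesic balls $B_{g(t)}(p,\rho)$ with $r/2\leq\rho\leq r$ are comparable to $B_{g(0)}(p,\rho)$, and we may work with a fixed spatial cutoff $\eta$ attached to the initial metric, satisfying $\eta\equiv 1$ on $B_{g(0)}(r/2)$, $\operatorname{supp}\eta\subset B_{g(0)}(r)$, $|\n\eta|\leq Cr^{-1}$, $|\n^2\eta|\leq Cr^{-2}$.

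The core of the proof is the heat-type equation (\ref{evolutionEquationOfRm}) for $Rm$. Under our hypotheses the term $\nabla^2\phi*\nabla^2\phi$ is bounded by $CK^{2}$ and $Rm*\nabla\phi*\nabla\phi$ is bounded by $CK|Rm|$, so those can be carried through harmlessly. What remains problematic are the $Rm*Rm$ term, which must be tamed by the cutoff, and the $\n^3\phi *\n\phi$ term, which \emph{cannot} be bounded pointwise from the hypothesis alone. Computing the evolution of $F=\eta^{2k}|Rm|^{2}$ for a suitably large $k$ produces
\[
\bigl(\partial_t-\tfrac{1}{16}\Delta\bigr)F
\;\leq\; -\tfrac{1}{32}\eta^{2k}|\n Rm|^{2}+CK\,F+C|Rm|F+CK^{2}\eta^{2k}+2\eta^{2k}\langle Rm,\n^{3}\phi*\n\phi\rangle+E,
\]
where $E$ collects cutoff errors controlled by $r^{-1},r^{-2}$ and lower powers of $|Rm|$.

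To handle the troublesome $\langle Rm,\n^{3}\phi*\n\phi\rangle$ term we follow the KMW philosophy of adding a correction. Concretely, one writes
\[
\eta^{2k}\langle Rm,\n^{3}\phi*\n\phi\rangle
=\n\!\cdot\!\bigl(\eta^{2k}\langle Rm,\n^{2}\phi*\n\phi\rangle\bigr)
-\eta^{2k}\langle\n Rm,\n^{2}\phi*\n\phi\rangle
+\eta^{2k}\langle Rm,\n^{2}\phi*\n^{2}\phi\rangle+(\text{cutoff}),
\]
and defines an auxiliary quantity
\[
G=F+A\,\eta^{2k}\langle Rm,\n^{2}\phi*\n\phi\rangle,
\]
whose time derivative along (\ref{spinorRicciFlow}) is arranged so that the $\n^{3}\phi *\n\phi$ contribution to $\partial_{t}F$ is exactly canceled by $\partial_{t}$ hitting the extra factor $\n^{2}\phi*\n\phi$ (the evolution of $\nabla^2\phi$ is, modulo terms bounded by $CK^{2}+CK|Rm|+|\n^{3}\phi|\cdot(\text{bounded})$, what one expects from differentiating the spinor equation in (\ref{spinorRicciFlow}) twice). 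The remaining divergence term integrates cleanly and the $\langle\n Rm,\n^{2}\phi*\n\phi\rangle$ term is absorbed into the good Bernstein term $-\tfrac{1}{32}\eta^{2k}|\n Rm|^{2}$ via Young's inequality.

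The upshot is a pointwise differential inequality of the form
\[
\bigl(\partial_t-\tfrac{1}{16}\Delta\bigr)G\;\leq\; C\,G^{2}+C(n,K,r^{-1})\,G+C(n,K,r^{-1}),
\]
to which Hamilton's parabolic maximum principle applies on $B_{g(0)}(r)\times[0,T]$. An ODE comparison with the corresponding Riccati-type equation $y'=Cy^{2}+C_{1}y+C_{2}$, started from $y(0)=\sup_{B_{g(0)}(r)}|Rm|^{2}(g(0))$, produces the claimed bound on $\sup_{B(r/2)\times[0,T]}|Rm|$ in terms of $n, K, r^{-1}, T$ and the initial curvature bound. The main obstacle, as in \cite{KMW2016}, is identifying the correct correction $A\,\eta^{2k}\langle Rm,\n^{2}\phi*\n\phi\rangle$ and verifying, through a somewhat delicate bookkeeping of the evolution of $\n^{2}\phi$, that every leftover term really is controlled by $|Ric|, |\n\phi|, |\n^{2}\phi|\leq CK$ together with the good Bernstein term $-\eta^{2k}|\n Rm|^{2}$; once that algebra is carried out the proof reduces to the standard maximum-principle calculus.
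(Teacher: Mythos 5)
Your plan is a pointwise maximum-principle argument, but the paper proves Lemma \ref{estimateOfRm} by an $L^p$ energy method: it computes $\partial_t\int\eta^p|Rm|^p\,dv(t)$, handles the $\mathcal{V}(Ric)$ and $\nabla^3\phi$ terms by integration by parts, closes a Gronwall inequality in $L^p$, and only then converts to a pointwise bound via Nash--Moser iteration. This difference is not cosmetic; it is forced by exactly the term you flagged, and your pointwise version has a genuine gap there.

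The problem is that the $\nabla^3\phi*\nabla\phi$ term in (\ref{evolutionEquationOfRm}) can be moved around by integration by parts but not killed pointwise. Your identity
$\eta^{2k}\langle Rm,\nabla^{3}\phi*\nabla\phi\rangle = \nabla\!\cdot(\eta^{2k}\langle Rm,\nabla^{2}\phi*\nabla\phi\rangle) - \eta^{2k}\langle\nabla Rm,\nabla^{2}\phi*\nabla\phi\rangle + \eta^{2k}\langle Rm,\nabla^{2}\phi*\nabla^{2}\phi\rangle + (\text{cutoff})$
contains a total divergence. In an $L^p$ argument that divergence integrates to zero against the cutoff support; in a pointwise maximum-principle argument it is a live first-order term in $\nabla Rm$, $\nabla^3\phi$, etc., and there is nothing you can do with it at the spatial maximum of $G$. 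Saying it ``integrates cleanly'' is precisely appealing to the integral method you are trying to avoid.

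The proposed correction $G = F + A\,\eta^{2k}\langle Rm,\nabla^{2}\phi*\nabla\phi\rangle$ does not rescue this, for two reasons. First, when $\partial_t$ hits $\nabla^2\phi$, the spinor evolution equation (even the pure heat part $\partial_t\phi\sim\Delta\phi$) forces $\partial_t\nabla^2\phi\sim\Delta\nabla^2\phi + \ldots$, i.e.\ a $\nabla^4\phi$ term, so the correction manufactures a worse higher-order term rather than canceling $\nabla^3\phi*\nabla\phi$. Second, the spinor equation in (\ref{spinorRicciFlow}) has, beyond $\Delta\phi$, the genuinely second-order term $\frac{1}{32}\langle\phi,e_i\cdot\nabla_j\mathbf{D}\phi\rangle\,e_j\wedge e_i\cdot\phi$, whose derivatives produce $\nabla^{k+2}\phi$ in the evolution of $\nabla^k\phi$; the paper deals with this (in Lemma \ref{lemma: ddt of L2 integral of derivatives of phi}) by a careful integration-by-parts cancellation using the Dirac structure, and that trick again has no pointwise analogue. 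So a correction of the type you describe cannot be completed, and the Riccati inequality you assert at the end does not follow. If you want to keep the maximum-principle flavor, you would first need pointwise control of $\nabla^3\phi$, which is exactly what is not assumed; the $L^p$/Moser route is the way to sidestep that.
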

\begin{proof}
We use $C$ to denote constants that may depend on $n, K, r^{-1}$ and $p$.  For simplicity, we allow C to vary from term to term. Let $\eta$ be a cut-off function independent of $t$, and $|\n \eta|^2 \leq Cr^{-1} \eta$.  By (\ref{timeDerivativeOfRm}) we have
\begin{eqnarray*}
&   & \partialt \int \eta^p |Rm|^p dv(t) \cr
& = & \int p \eta^p |Rm|^{p-2} \langle Rm, \partialt Rm\rangle + \frac{1}{2}tr_g(\partialt g) \eta^p|Rm|^p  \cr
& \leq & C\int  \eta^{p-2}|\n \eta|^2|Rm|^{p-2} |\nabla Ric|^2 +  \eta^p|Rm|^{p-2} |\nabla Rm| |\nabla Ric|  \cr
&   & +\, C\int \eta^p|Rm|^p  + \eta^{p-1}|\n \eta| |Rm|^{p-1}+ \eta^p|Rm|^{p-2}|\nabla Rm| \cr
\end{eqnarray*}
where we used integration by part to deal with $\V(Ric)$ and $\nabla^3 \phi$. In this section we can allow the constants to depend on $p$.

By the heat type equations we have
\begin{eqnarray*}
\frac{1}{8}|\nabla Rm|^2
& \leq & (\frac{1}{16}\Delta - \partialt)|Rm|^2 + C(n)(|Rm|^3 + |Rm| |\nabla^2 \phi|^2) \cr
&      & +\, C |Rm|^2+ Rm * \nabla^3 \phi * \nabla \phi,
\end{eqnarray*}
and
\begin{eqnarray*}
\frac{1}{8}|\nabla Ric|^2
& \leq & (\frac{1}{16} \Delta - \partialt)|Ric|^2 + C(n) ( |Rm||Ric|^2 + |Rm| |Ric| |\nabla \phi|^2  ) \\
&      & +\, C(n) |Ric| |\nabla^2 \phi|^2 + Ric * \nabla^3 \phi * \nabla \phi .
\end{eqnarray*}
Note that $|\n^2 \phi| \leq K$ implies $|Ric| \leq nK$. We need to deal with the following (other terms are easy to handle)
\[
\begin{split}
& I =  \int  \eta^{p-1} |Rm|^{p-2} |\nabla Ric|^2, \\
& II =  \int \eta^p |Rm|^{p-2} |\nabla Rm| |\nabla Ric|, \\
& III =  \int \eta^p |Rm|^{p-2} |\nabla Rm| .
\end{split}
\]

\begin{eqnarray*}
I
& \leq & 8 \int \eta^{p-1}|Rm|^{p-2} (\frac{1}{16} \Delta - \partialt) |Ric|^2 + C \eta^{p-1}|Rm|^{p-1} \cr
&      & +\, C \int  \eta^{p-1}|Rm|^{p-2} +  \eta^{p-1} |Rm|^{p-2} Ric* \nabla^3 \phi * \nabla \phi   \cr
& \leq & -\, 8\int \eta^{p-1} |Rm|^{p-2}\partialt |Ric|^2  \cr
&    & +\, C \int  \eta^{p-1}|Rm|^{p-3} |\nabla Rm| |\nabla Ric|+ C \int \eta^{p-2}|\n \eta||Rm|^{p-2}|\n Ric| \cr
&    & +\, C\int \eta^{p-1 }|Rm|^{p-1} + \eta^{p-1}|Rm|^{p-2}+ \eta^{p-2}|\n \eta| |Rm|^{p-2} \cr
&    & +\, C \int \eta^{p-1}|Rm|^{p-3} |\nabla Rm| + C \int \eta^{p-1}|Rm|^{p-2} |\nabla Ric|.
\end{eqnarray*}
By the Cauchy inequality the above implies
\begin{eqnarray*}
I
& \leq & C \int \eta^{p-1}|Rm|^{p-4} |\nabla Rm|^2 + \eta^{p-1}|Rm|^{p-1} + \eta^{p-2}|Rm|^{p-2} \cr
&      & -\, 16 \partialt \int \eta^{p-1} |Rm|^{p-2}|Ric|^2 dv +  16\int \eta^{p-1} |Ric|^2\partialt |Rm|^{p-2}.
\end{eqnarray*}
The first term in the above is
\begin{eqnarray*}
&    & \int \eta^{p-1} |Rm|^{p-4}|\nabla Rm|^2 \cr
& \leq & 8 \int \eta^{p-1}|Rm|^{p-4} (\frac{1}{16}\Delta - \partialt )|Rm|^2 + C \int \eta^{p-1} (|Rm|^{p-1} + |Rm|^{p-2})  \cr
&      & +\, C \int \eta^{p-1}|Rm|^{p-3} + C \int\eta^{p-1} |Rm|^{p-4} Rm * \nabla^3 \phi * \nabla \phi,
\end{eqnarray*}
after integration by part we can derive
\begin{eqnarray*}
&      &\int \eta^{p-1} |Rm|^{p-4}|\nabla Rm|^2 \\
& \leq & -\, C\partialt \int \eta^{p-1} |Rm|^{p-2} \\
&      & +\, C \int \eta^{p-1}(|Rm|^{p-1} + |Rm|^{p-2}+ |Rm|^{p-3}+ |Rm|^{p-4}) \\
&      & +\, C \int \eta^{p-2} (|Rm|^{p-2}+ |Rm|^{p-3}).
\end{eqnarray*}
For the last term in the above estimate of $I$, we have
\begin{eqnarray*}
&      & \int \eta^{p-1} |Ric|^2 \partialt |Rm|^{p-2} \\
& \leq  &(p-2)\int \eta^{p-1} |Ric|^2 |Rm|^{p-4}\langle Rm, \partialt Rm \rangle  + C \int \eta^{p-1}|Rm|^{p-2}\\
& \leq & C(p-2) \int \eta^{p-1}|Ric|^2 |Rm|^{p-4}|\nabla Rm| (|\nabla Ric|+ K) \\
&      & +\, C(p-2)\int \eta^{p-1}|Ric| |Rm|^{p-3} |\nabla Ric| (|\nabla Ric | + K ) \\
&      & +\, C\int \eta^{p-2}|\n \eta| |Ric|^2 |Rm|^{p-3} ( |\n Ric| + K )\\
&      & +\, C(n, K)(p-2)\int \eta^{p-1}|Rm|^{p-3}+ C \int \eta^{p-1}|Rm|^{p-2} \\
&  \leq & C(1+ \frac{1}{\epsilon})\int\eta^{p-1} |Rm|^{p-4}|\nabla Rm|^2 + \eta^{p-2}( |Rm|^{p-2} +|Rm|^{p-3}+ |Rm|^{p-4})\\
&     & +\, \epsilon I,
\end{eqnarray*}
where we have used $|\nabla Ric| \leq (n-1) |\nabla Rm|$. Therefore
\begin{eqnarray*}
I
& \leq & -\, C \partialt \int \eta^{p-1} |Rm|^{p-2} -\, C\partialt \int \eta^{p-1}  |Rm|^{p-2}|Ric|^2 \\
&      & +\, C \int \eta^{p-1}  |Rm|^{p-1}+ \eta^{p-2} (|Rm |^{p-2} + |Rm|^{p-3} + |Rm|^{p-4}).
\end{eqnarray*}

Now we estimate the term
\begin{eqnarray*}
II
& = & \int\eta^p |Rm|^{p-2}|\nabla Rm | |\nabla Ric | \\
& \leq & C \int \eta^p |Rm|^{p-1}|\nabla Ric |^2 + C\int \eta^p |Rm|^{p-3}|\nabla Rm|^2 \\
& = & II_1 + II_2 .
\end{eqnarray*}

\begin{eqnarray*}
II_1
& \leq & C \int \eta^p |Rm|^{p-1} (\frac{1}{16}\Delta - \partialt)|Ric|^2 + \eta^p|Rm|^p+\eta^p |Rm|^{p-1} \\
&      & +\, \int \eta^p |Rm|^{p-1}Ric *\nabla^3 \phi * \nabla \phi \\
& \leq & C \int \eta^{p-1}(|\n \eta|+1)|Rm|^{p-1} |\n Ric| +\eta^p |Rm|^{p-2}|\nabla Rm| |\nabla Ric| \\
&      & +\, C \int \eta^p (|Rm|^p + |Rm|^{p-1})\\
&      & +\, C\int  \eta^{p-1}|\n \eta||Rm|^{p-1} + \eta^p |Rm|^{p-2}|\n Rm| \\
&      & -\, C \partialt \int \eta^p |Rm|^{p-1}|Ric|^2 + C \int \eta^p |Ric|^2 \partialt |Rm|^{p-1} \\
& \leq & \frac{1}{4}II_1 + C II_2 + C \int \eta^p |Rm|^p + \eta^{p-1}(|Rm|^{p-1} + |Rm|^{p-2} )\\
&      & -\, C \partialt \int \eta^p |Rm|^{p-1}|Ric|^2 + C \int \eta^p |Ric|^2 \partialt |Rm|^{p-1}. \\
\end{eqnarray*}
Similarly as above we derive
\begin{eqnarray*}
\int \eta^p |Ric|^2 \partialt |Rm|^{p-1}
& \leq & II_2 + C\int \eta^{p-1}(|Rm|^{p-1}+ |Rm|^{p-2} + |Rm|^{p-3}) \\
&      & +\, C\int \eta^p|Rm|^p + \frac{1}{4C}II_1,
\end{eqnarray*}
hence
\[
II_1 \leq  C II_2 - C\partialt \int  \eta^p |Rm|^{p-1}|Ric|^2 + C \int \eta^{p-1}(|Rm|^{p-3} + |Rm|^{p-1}). \\
\]

\begin{eqnarray*}
II_2
& \leq & C \int \eta^p (|Rm|^{p-3} (\frac{1}{16}\Delta - \partialt )|Rm|^2 + |Rm|^p + |Rm|^{p-2} )\\
&      & +\, C \int \eta^p |Rm|^{p-3} Rm * \nabla^3 \phi * \nabla \phi \\
&  \leq & -\, C \partialt \int \eta^p |Rm|^{p-1} \\
&      & +\, C \int \eta^p |Rm|^p + \eta^{p-1}(|Rm|^{p-1}+ |Rm|^{p-2} + |Rm|^{p-3})
\end{eqnarray*}
by similar arguments as before. Clearly

\[
III \leq C {II_{2}} + C \int \eta^p |Rm|^{p-1}.
\]
 Therefore we have shown
 \begin{eqnarray*}
 &    & \partialt \int \eta^p |Rm|^p dv \\
 & \leq & -\, C\partialt \int \eta^{p-1} |Rm|^{p-2}|Ric|^2 + C \partialt \int \eta^{p-1} |Rm|^{p-2} \\
 &      & -\, C \partialt \int \eta^p|Rm|^{p-1}|Ric|^2 + C \partialt \int \eta^{p} |Rm|^{p-1} \\
 &      & +\, C \int \eta^p |Rm|^p +  \eta^{p-4}|Rm|^{p-4}\\
 \end{eqnarray*}

Using Gronwall\rq{}s inequality, and use Young\rq{}s inequality to interpolate will yield an estimate of $\int |Rm|^p dv(t)$,
\[
\int \eta^p |Rm|^p dv(g(t)) \leq e^{Ct} \left( \int \eta^p |Rm| dv(g(0)) + CV_{g(0)}({\rm spt}(\eta))t \right),
\]
where the constant $C$ depends on $n, p, K $ and $\sup \eta^{-1}|\n \eta|^2$.

 Then the Nash-Moser iteration will give us pointwise estimate,
\begin{eqnarray*}
&    & \sup_{B(r/2)\times [0,T]} |Rm|\\
& \leq & C(n)e^{c(n,K)T}((C(n,p)r^2\sup_{[0,T]} |Rm|_{L^p(r)}(t))^{\alpha(n,k,p)} + r^{-\beta(n,k)}) \\
&      & \times \left( V(r)^{-1}|Rm|_{L^k(B(r)\times[0,T])}+ \sup_{B(r)} |Rm|(g(0))\right).
\end{eqnarray*}
Note the only non-standard step here is to use integral by part and absorption arguments to get rid of $\nabla^3 \phi$ . See for example \cite{Li2012} for details of Nash-Moser iteration and Theorem 1.2 of \cite{LT1991} where the initial value was taken into account.

\end{proof}

%%%%%%%%%%%%%%%%%%%%%%%%%%%%%%%%%%%%%%%%%%%%%%%%%%%%%%%%%%%%%%%%%%%%%%%%%%%%%%%%%%%%%%%%%%%%%%%%%%%%%%%%%%%%%%%%%%%%%%%%%%%%%%%%%%%%%%%%%%%%%%%%%%%%%%

\section{$L^2$ estimates of derivatives}\label{section: l2 estimates of derivatives}
\noindent In this section we use the $L^2$ method to obtain derivative estimates for (\ref{spinorRicciFlow}). We assume $|\n^2 \phi|$, $|\n \phi|^2$, $|Rm| \leq K$ in the support of a cut-off function $\eta$, with $|\n \eta|^2 \leq L \eta$.

\begin{lem}\label{lemma: ddt of L2 integral of derivatives of Rm} For each integer $k\geq 0$ and $m\geq 2k$, there exists a constant $C$ depending on $n, k, m, K,L$, such that
\begin{eqnarray*}
&     &\partialt \int \eta^m |\n^k Rm|^2 \\
& \leq & -\, \frac{1}{128} \int \eta^m |\n^{k+1} Rm|^2  + C \int (\eta^m |\n^k Rm|^2 + \eta^{m}|\n^{k+2} \phi|^2 ) \\
&      & +\, C \int _{\eta>0} |Rm|^2 + |\n^2 \phi|^2 + |\n \phi|^2 + |\phi|^2.
\end{eqnarray*}
\end{lem}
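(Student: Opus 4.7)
The plan is to differentiate $\int \eta^m |\n^k Rm|^2\,dv$ in time, substitute the heat-type equation (\ref{evolutionEquationForDerivativesOfRm}), and balance the bad high-order terms on its right-hand side against the good term $-\tfrac{1}{8}\int\eta^m|\n^{k+1}Rm|^2$ produced by integration by parts on the Laplacian. First I would expand
\[
\partialt \int \eta^m |\n^k Rm|^2 dv = 2 \int \eta^m \langle \n^k Rm, \partialt \n^k Rm\rangle dv + R,
\]
where $R$ collects the contributions from $\partialt dv$ and from differentiating in $t$ the metric contractions hidden in $|\n^k Rm|^2$. Since $|\partialt g| \leq C(|Rm|+|\n\phi|^2) \leq CK$ on $\{\eta>0\}$, the remainder is pointwise bounded by $CK|\n^k Rm|^2$, which is already of the form allowed on the right of the claimed inequality.

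Substituting (\ref{evolutionEquationForDerivativesOfRm}) into the principal term, the Laplacian piece, after integration by parts, becomes
\[
\tfrac{1}{8}\int \eta^m \langle \n^k Rm,\Delta \n^k Rm\rangle\,dv = -\tfrac{1}{8}\int \eta^m|\n^{k+1}Rm|^2\,dv - \tfrac{m}{8}\int \eta^{m-1}\langle \n\eta, \n^k Rm * \n^{k+1}Rm\rangle\,dv,
\]
and the cross term is controlled by Young's inequality using $|\n\eta|^2 \leq L\eta$: for any small $\delta$,
\[
\bigl|\text{cross}\bigr|\leq \delta\int\eta^m|\n^{k+1}Rm|^2 + C_\delta\int\eta^{m-1}|\n^k Rm|^2.
\]
The hypothesis $m\geq 2k$ ensures that $|\n\eta^m|^2\leq Cm^2L\eta^{2m-1}$ carries enough powers of $\eta$ for all subsequent absorptions.

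Next I would dispatch the three types of product terms on the right-hand side of (\ref{evolutionEquationForDerivativesOfRm}). The convolutions $\sum\n^l Rm*\n^{k-l}Rm$ and $\sum\n^{1+r}\phi*\n^{1+l-r}\phi*\n^{k-l}Rm$, together with the middle range $1\le l\le k+1$ of $\sum\n^{1+l}\phi*\n^{3+k-l}\phi$, involve only derivatives of $Rm$ of order $\leq k$ and of $\phi$ of order $\leq k+2$. Pairing each against $\n^k Rm$ and applying the interpolation inequalities recalled in the appendix, together with the pointwise bounds $|\n\phi|^2,|\n^2\phi|,|Rm|\leq K$, bounds each such integral by
\[
C\int\eta^m(|\n^k Rm|^2+|\n^{k+2}\phi|^2)+C\int_{\eta>0}(|Rm|^2+|\n^2\phi|^2+|\n\phi|^2+|\phi|^2).
\]
The endpoint cases $l=0$ and $l=k+2$ in $\sum\n^{1+l}\phi*\n^{3+k-l}\phi$ contain a factor $\n^{k+3}\phi$, which is one derivative too high for the target estimate. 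I would remove this extra derivative by integrating by parts once,
\[
\int\eta^m\,\n^k Rm * \n\phi * \n^{k+3}\phi\,dv = -\int \n\bigl(\eta^m\,\n^k Rm * \n\phi\bigr) * \n^{k+2}\phi\,dv,
\]
producing three subterms in which the gradient lands on $\eta^m$, on $\n^k Rm$, or on $\n\phi$. Using $|\n\phi|,|\n^2\phi|\leq C\sqrt K$ and Young's inequality with small weight for the $\n^{k+1}Rm$ piece, these subterms are bounded by
\[
\delta\int\eta^m|\n^{k+1}Rm|^2+C\int\eta^m(|\n^k Rm|^2+|\n^{k+2}\phi|^2)+C\int\eta^{m-1}(\cdots),
\]
with the last part absorbed via interpolation as above.

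Summing everything and choosing all Young weights $\delta$ small enough that the surviving coefficient of $\int\eta^m|\n^{k+1}Rm|^2$ is at most $-\tfrac{1}{128}$ yields the stated inequality. The genuinely delicate point is the trade in the previous paragraph: the right-hand side of (\ref{evolutionEquationForDerivativesOfRm}) contains a $\n^{k+3}\phi$ factor, whereas the only high-order good term the heat operator produces is $\int\eta^m|\n^{k+1}Rm|^2$, which is one derivative lower in $Rm$ and two derivatives lower in $\phi$. Trading $\n^{k+3}\phi$ for $\n^{k+1}Rm$ via integration by parts, and then absorbing the latter back into the good term while preserving a positive fixed lower bound on its coefficient, is where the estimate is genuinely tight; everything else reduces to the interpolation inequalities recalled in the appendix.
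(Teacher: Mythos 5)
Your proposal matches the paper's proof in all essentials: expand the time derivative, integrate the Laplacian by parts to produce the good term $-c\int\eta^m|\n^{k+1}Rm|^2$, control the convolution terms via the interpolation inequalities of the appendix, and, crucially, recognize that the endpoint terms $\n\phi*\n^{k+3}\phi$ in (\ref{evolutionEquationForDerivativesOfRm}) must be integrated by parts once so that the resulting $\n^{k+1}Rm$ factor can be absorbed into the good term with small Young weights. The paper implements the lower-order absorptions (e.g.\ $\int\eta^{m-1}|\n^k Rm|^2\lesssim\epsilon\int\eta^m|\n^{k+1}Rm|^2+C\epsilon^{-1}\int_{\eta>0}|Rm|^2$) by iterated integration by parts rather than by invoking the appendix lemmas directly, but this is the same mechanism you describe, and your explanation of the role of $m\geq 2k$ and of the genuine tightness in trading $\n^{k+3}\phi$ for $\n^{k+1}Rm$ is exactly where the paper's care is concentrated.
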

\begin{proof}

\begin{eqnarray*}
&     & \partialt \int \eta^m |\n^k Rm|^2  \\
& \leq &  \int \eta^m \langle \n^k Rm, \frac{1}{16}\Delta \n^k Rm\rangle  + \int \eta^m \langle \n^k Rm,  \sum_{l=0}^k \n^l Rm * \n^{k-l} Rm \rangle \\
&     & +\, \int \eta^m \langle \n^k Rm, \sum_{l=0}^k \sum_{r=0}^l \n^{1+r}\phi * \n^{1+l-r} \phi * \n^{k-l}Rm \rangle\\
&     & +\, \int \eta^m \langle \n^k Rm, \sum_{l=0}^{k+2} \n^{1+l}\phi * \n^{3+k-l}\phi \rangle + C \int \eta^m |\n^k Rm|^2\\
&   = & I_1 + I_2 + I_3 + I_4 + C \int \eta^m |\n^k Rm|^2.
\end{eqnarray*}

\begin{eqnarray*}
I_1
& = &\int \eta^m \langle \n^k Rm, \frac{1}{16}\Delta \n^k Rm\rangle \\
& = & -\, \frac{1}{16}\int\eta^m |\n^{k+1}Rm|^2 + m \eta^{m-1} \langle \n \eta \otimes \n^k Rm, \n^{k+1} Rm \rangle  \\
& \leq & -\, \frac{1}{32} \int \eta^m |\n^{k+1} Rm|^2 + Cm^2 \int \eta^{m-1} |\n^k Rm|^2.  \\
\end{eqnarray*}
To handle the term $\int \eta^{m-1} |\n^k Rm|^2$, we use integration by parts to get
\begin{eqnarray*}
&    &\int \eta^{m-1}|\n^{k} Rm|^2 \\
&  = &  \int \eta^{m-1} \langle \n^{k-1} Rm, \n^{k+1} Rm\rangle  + \int  \langle \n \eta^{m-1} \otimes \n^{k-1} Rm, \n^k Rm\rangle \\
&  \leq & \epsilon \int \eta^m |\n^{k+1} Rm|^2 + \frac{1}{2} \int \eta^{m-1} |\n^k Rm|^2  + C\epsilon^{-1}\int \eta^{m-2}|\n^{k-1} Rm|^2,
\end{eqnarray*}
then iterate the above inequality to get
\[
\int \eta^{m-1}|\n^{k} Rm|^2 \leq \epsilon \int \eta^m |\n^{k+1} Rm|^2 + C \epsilon^{-1} \int_{\eta>0} |Rm|^2 .
\]
Then we can choose $\epsilon$ properly to get
\[
\begin{split}
I_1 \leq  - \frac{1}{64} \int \eta^m |\n^{k+1} Rm|^2 + C \int \eta^m |\n^k Rm|^2 + C \int _{\eta>0} |Rm|^2.
\end{split}
\]
Then we estimate
\begin{eqnarray*}
I_2
& = &  \int \eta^m \langle \n^k Rm,  \sum_{l=0}^k \n^l Rm * \n^{k-l} Rm \rangle \\
& \leq & C \left( \int \eta^m |\n^k Rm|^2 + \int_{\eta>0} |Rm|^2 \right),
\end{eqnarray*}

\begin{eqnarray*}
I_3
& = &  \int \eta^m \langle \n^k Rm, \sum_{l=0}^k \sum_{r=0}^l \n^{1+r}\phi * \n^{1+l-r} \phi * \n^{k-l}Rm \rangle\\
& \leq & C \left( \int \eta^m |\n^k Rm|^2 + \int_{\eta>0} |Rm|^2 \right) \\
&   & +\, C \left( \int \eta^m |\n^{1+k}\phi|^2 + \int_{\eta>0} |\n \phi|^2\right).
\end{eqnarray*}
Moreover, as before,
\begin{equation*}
\int \eta^{m} |\nabla^{1+l}\phi|^{2} \leq \int \eta^{m-1} |\nabla^{1+l}\phi|^{2} \leq C \int \eta^{m} |\nabla^{2+k}\phi|^{2} + C \int_{\eta>0} |\phi|^{2}.
\end{equation*}

\begin{eqnarray*}
I_4
& =  &\int \eta^m \langle \n^k Rm, \sum_{l=0}^{k+2} \n^{1+l}\phi * \n^{3+k-l}\phi \rangle \\
& = & \int \eta^m \langle \n^k Rm, \sum_{j=0}^{k} \n^{2+j}\phi * \n^{2+k-j}\phi \rangle + \int \eta^m \langle \n^k Rm, \n^{k+3} \phi * \n \phi \rangle  ,
\end{eqnarray*}
first integrate by part to lower the order of $\n^{3+k} \phi$, then use Cauchy inequality and the interpolation lemma in the appendix to get
\begin{eqnarray*}
I_4 & \leq & \frac{1}{128} \int \eta^m |\n^{k+1} Rm|^2 + C \left(\int \eta^m |\n^k Rm|^2 + \int_{\eta>0} |Rm|^2\right) \cr
& & +\, C\left( \int \eta^{m} |\n^{k+2} \phi|^2 + \int _{\eta>0 } |\n^2 \phi|^2 \right),
\end{eqnarray*}
where we used the same argument as in the estimate of $I_1$ to control the term $\int \eta^{m-1}|\n^k Rm|^2$.
By the above estimates we have the lemma.
\end{proof}

\begin{lem}\label{lemma: ddt of L2 integral of derivatives of phi}
For any integer $k\geq 1$, $m> 2k$, there exists a constant $C(n,k,m,K,L)$ such that
\begin{eqnarray*}
\partialt \int \eta^m |\n^k \phi|^2
& \leq &   - \frac{1}{4} \int \eta^m |\n^{k+1} \phi|^2 + C \int \eta^m |\n^k \phi|^2 \cr
&      & +\, \eta^m |\n^{k-1} Rm|^2  +  C \int _{\eta>0} |\n \phi|^2 + |\phi|^2+ |Rm|^2.
\end{eqnarray*}
\end{lem}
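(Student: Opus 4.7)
The plan is to follow the template of Lemma~\ref{lemma: ddt of L2 integral of derivatives of Rm}, now based on the heat-type evolution equation
\[
(\partialt - \Delta)\n^k \phi = \tfrac{1}{32}\langle \phi, e_p\cdot \n^k\n_q\D\phi\rangle e_q\wedge e_p\cdot\phi + \sum \n^{1+p}\phi * \n^q \phi * \n^{k+1-p-q}\phi + \sum \n^l Rm * \n^{k-l}\phi + \sum \n^l Ric * \n^{k-l}\phi
\]
derived in Section~\ref{section: pull-back to Ricci flow}. First I would differentiate $\int \eta^m |\n^k \phi|^2$, noting that the time derivatives of the volume form and of the metric contractions defining $|\cdot|^2$ are, under the equation (\ref{spinorRicciFlow}), pointwise bounded by $C(n,K)$ on $\{\eta>0\}$; these generate a harmless $C\int \eta^m |\n^k\phi|^2$ contribution.

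The Laplacian term is treated exactly as for $I_{1}$ in the previous lemma: integration by parts yields $-\int \eta^m |\n^{k+1}\phi|^2$ plus a cross-term $-m\int \eta^{m-1}\langle \n\eta\otimes \n^k\phi, \n^{k+1}\phi\rangle$; the latter is absorbed by Cauchy into a small fraction of $\int\eta^m|\n^{k+1}\phi|^2$ plus $C\int \eta^{m-1}|\n^k\phi|^2$, and the weight is iteratively dropped (using $|\n\eta|^2\leq L\eta$ and $m>2k$) down to $C\int_{\eta>0}|\phi|^2$. The trilinear spinor terms $\n^{1+p}\phi*\n^q\phi*\n^{k+1-p-q}\phi$ are handled by $|\n\phi|^2\leq K$ together with Cauchy--Schwarz and the interpolation inequality from the appendix, contributing only $C\int\eta^m|\n^k\phi|^2 + C\int_{\eta>0}|\phi|^2$. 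For $\sum_{l=0}^{k-1}\n^l Rm * \n^{k-l}\phi$ paired with $\n^k\phi$, Cauchy plus interpolation between $|Rm|$ and $|\n^{k-1}Rm|$ gives exactly the $\int\eta^m|\n^{k-1}Rm|^2$ term appearing in the conclusion, plus the allowed lower-order $C\int_{\eta>0}|Rm|^2$.

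The delicate pieces are the top-order Ricci term and the modified Dirac term. For $\n^k Ric * \phi$, which a priori involves $\n^k Rm$, I would integrate by parts once to obtain schematically
\[
\int \eta^m \langle \n^k\phi, \n^k Ric * \phi\rangle \;\sim\; -\int \eta^m \langle \n^{k+1}\phi, \n^{k-1}Ric * \phi\rangle - \int \eta^m \langle \n^k\phi, \n^{k-1}Ric * \n\phi\rangle + (\text{weight terms}),
\]
then apply Cauchy and the bound $|\n^{k-1}Ric|\leq C|\n^{k-1}Rm|$; a small fraction of $\int \eta^m|\n^{k+1}\phi|^2$ is absorbed into the good term and the remainder fits into the stated right-hand side. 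For the modified Dirac term, using $\D\phi = e_i\cdot\n_i\phi$ and commuting Clifford multiplication past $\n$ up to curvature corrections, the principal symbol involves $\n^{k+2}\phi$. After pairing with $\eta^m \n^k\phi$, I would integrate by parts twice, transferring the two derivatives onto $\eta^m\n^k\phi\cdot \phi$; thanks to the explicit small constant $1/32$ and $|\phi|=1$, the highest-order piece is bounded by a small fraction of $\int\eta^m|\n^{k+1}\phi|^2$, while the lower-order remainders fall into $C\int\eta^m|\n^k\phi|^2$ plus the allowed boundary integrals $C\int_{\eta>0}(|\phi|^2 + |\n\phi|^2 + |Rm|^2)$.

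I expect the modified Dirac term to be the main obstacle, for two reasons. First, the two successive integrations by parts must be carried out while tracking the curvature corrections coming from commuting $\n$ with Clifford multiplication (these produce $Rm * \n^{\leq k}\phi$-type terms that must themselves be controlled, with the highest Rm-derivative still $\n^{k-1}Rm$ after one more integration by parts). Second, the constant in front of the resulting $\int \eta^m|\n^{k+1}\phi|^2$ must be kept small enough, combined with the fractions lost in the Laplacian and Ricci steps, so that the total surviving coefficient in front of $\int\eta^m|\n^{k+1}\phi|^2$ is at most $-\tfrac{1}{4}$ as stated. The anti-symmetry of $e_q\wedge e_p$ together with $|\phi|\equiv 1$ and its derivative identities should be exploited to simplify several of these terms.
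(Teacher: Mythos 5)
Your outline for the Laplacian piece, the trilinear spinor terms, the $\n^{l}Rm*\n^{k-l}\phi$ $(l\leq k-1)$ terms, and the $\n^{k}Ric*\phi$ term matches the paper, and the single integration by parts you propose for the Ricci term is indeed what the paper does. However, your plan for the modified Dirac term $II_{2}=\frac{1}{32}\int\eta^{m}\langle\phi,e_{p}\cdot\n^{k}\n_{q}\D\phi\rangle\langle\n^{k}\phi,e_{q}\wedge e_{p}\cdot\phi\rangle$ has a genuine gap, and this is the one non-routine step of the lemma.

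Integrating by parts \emph{twice} does not help here: the two factors carry $k+2$ and $k$ derivatives of $\phi$, so two transfers simply move you from $\langle\n^{k+2}\phi,\n^{k}\phi\rangle$ to $\langle\n^{k}\phi,\n^{k+2}\phi\rangle$, leaving you with the same top-order imbalance you started with (plus weight and curvature commutators). The natural stopping point is \emph{one} integration by parts, after which both factors are at order $k+1$. But then your proposed absorption "thanks to the small constant $1/32$" is not sound in general dimension: after Cauchy--Schwarz and summing over the frame indices $p,q$, the coefficient in front of $\int\eta^{m}|\n^{k+1}\phi|^{2}$ picks up dimensional factors (on the order of $n^{3/2}/32$), which already exceeds the available negative contribution $-\frac{1}{2}\int\eta^{m}|\n^{k+1}\phi|^{2}$ from $II_{1}$ once $n\geq 4$. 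There is no room to make the total coefficient $\leq -\frac{1}{4}$ by this route.

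What the paper actually does, and what you are missing, is a sign/structure observation rather than a smallness argument. After the single integration by parts on $\n_{q}$, the $e_{q}\cdot e_{p}$ part of $e_{q}\wedge e_{p}$ turns the $q$-sum into another Dirac operator: one obtains $\langle\D\n^{k}\phi,e_{p}\cdot\phi\rangle$. Using the commutation identities $\n^{k}\n_{q}\D\phi=\n_{q}\n^{k}\D\phi+\sum\n^{i}Rm*\n^{k-1-i}\D\phi$ and $\D\n^{k}\phi-\n^{k}\D\phi=\sum\n^{i}Ric*\n^{k-1-i}\phi$, the leading contribution becomes exactly $-\frac{1}{32}\int\eta^{m}\bigl|\langle e_{p}\cdot\phi,\n^{k}\D\phi\rangle\bigr|^{2}\leq 0$, a nonpositive square that requires no absorption at all. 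The remaining $\delta_{pq}$ part produces $\langle\n_{p}\n^{k}\phi,\phi\rangle$, which is \emph{not} a genuine top-order term: by repeatedly differentiating $|\phi|\equiv 1$ one expresses $\langle\n^{k}\phi,\phi\rangle$ in products of derivatives of order $\leq k-1$, so $\langle\n_{p}\n^{k}\phi,\phi\rangle$ is controlled by $\n^{\leq k}\phi$. Everything else (weight derivatives, curvature commutators) is lower order. Without this cancellation the lemma cannot be proved by your route.
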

\begin{proof}

\begin{eqnarray*}
&   & \partialt  \int \eta^m | \nabla^k \phi |^2 \cr
& = & 2\int \eta^m \langle \n^k \phi, \Delta \n^k \phi \rangle +  \frac{1}{32} \eta^m \langle \phi, e_p \cdot  \nabla^k \nabla_q \D \phi\rangle \langle \n^k \phi,  e_q \wedge e_p \cdot \phi \rangle\cr
&   & + \sum_{p=0}^k \sum_{q=0}^{k+1-p} \int \eta^m \langle \n^k \phi, \nabla^{1+p}\phi * \nabla^{q}\phi * \nabla^{k+1-p-q}\phi \rangle \cr
&   & +\sum_{0\leq l \leq k-1} \int \eta^m \langle \n^k \phi, \nabla^l Rm * \nabla^{k-l}\phi \rangle  \cr
&   & + \sum_{1\leq l \leq k } \int \eta^m \langle \n^k \phi, \nabla^l Ric * \nabla ^{k-l} \phi \rangle \cr
&  = & II_1 + II_2 + II_3 + II_4 + II_5.
\end{eqnarray*}

\begin{eqnarray*}
II_1 & = & 2 \int \eta^m \langle \n^k \phi, \Delta \n^k \phi\rangle \cr
& =& -\, 2\int \eta^m |\n^{k+1} \phi|^2 + 2 \int \eta^{m-1}\langle \n \eta \otimes \n^k \phi, \n^{k+1} \phi\rangle \cr
& \leq &  -\int \eta^m |\n^{k+1} \phi|^2 + C \int \eta^{m-1} |\n^k \phi|^2\cr
& \leq & -\, \frac{1}{2}\int \eta^m |\n^{k+1} \phi|^2 +  C \int \eta^{m-2}|\n^{k-1} \phi|^2   \cr
& \leq & -\, \frac{1}{2}\int \eta^m |\n^{k+1} \phi|^2 + C \int \eta^m |\n^k \phi|^2 + \int _{\eta>0} |\phi|^2.\\
\end{eqnarray*}

\[
 II_2 = \frac{1}{32} \int  \eta^m \langle \phi, e_p \cdot  \nabla^k \nabla_q \D \phi\rangle \langle \n^k \phi,  e_q \wedge e_p \cdot \phi \rangle.
\]
By commuting covariant derivatives and the Dirac operator, we have
\[
\n^k \n_q \D \phi = \n_q \n^k \D \phi +\sum_{i=0}^{k-1} \n^i Rm * \n^{k-1-i}\D\phi.
\]
\begin{equation*}
\D \n^k \phi - \n^k \D \phi = \sum_{i=0}^{k-1} \n^i Ric * \n^{k-1-i}\phi.
\end{equation*}
Then integration by part yields
\begin{eqnarray*}
  II_2
&= &\frac{1}{32} \int \eta^m \langle \phi, e_p \cdot  \nabla_q \nabla^k  \D \phi\rangle \langle \n^k \phi,  e_q \cdot e_p \cdot \phi - \delta_{pq} \phi \rangle \cr
& &+ \int \eta^m  \sum_{i=0}^{k-1} \n^i Rm * \n^{k-1-i}\D\phi * \n^k \phi * \phi * \phi \cr
&= & -\, \frac{1}{32} \int \eta^m \langle \phi, e_p \cdot  \nabla^k  \D \phi\rangle (\langle - \D \n^k \phi,  e_p \cdot \phi\rangle  -\langle \n_q \n^k \phi,  \delta_{pq} \phi \rangle)\cr
& & +\, \frac{1}{32}\int m \eta^{m-1}\n_q \eta \langle \phi, e_p \cdot  \nabla^k\D \phi\rangle \langle \n^k \phi,  e_q \wedge e_p \cdot \phi \rangle \\
& & +\, \int \eta^m \n^{k}\D \phi * \n^k \phi * \n \phi * \phi \cr
& & +\,  \sum_{i=0}^{k-1} \int \eta^m   \n^i Rm * \n^{k-1-i}\D\phi * \n^k \phi * \phi * \phi \\
& = & -\, \frac{1}{32} \int \eta^m |\langle e_p \cdot \phi,  \nabla^k  \D \phi\rangle|^2  + \frac{1}{32}\int \eta^m \langle \phi, e_p \cdot \n^k \D \phi\rangle \langle \n_p \n^k \phi, \phi \rangle\\
& & +\, \frac{1}{32}\int m \eta^{m-1}\n_q \eta \langle \phi, e_p \cdot  \nabla^k\D \phi\rangle \langle \n^k \phi,  e_q \wedge e_p \cdot \phi \rangle \\
& & + \int \eta^m \n^{k}\D \phi * \n^k \phi * \n \phi * \phi \\
& & +  \sum_{i=0}^{k-1} \int \eta^m   \n^i Rm * \n^{k-1-i}\D\phi * \n^k \phi * \phi * \phi \\
& & + \sum_{i=0}^{k-1} \int \eta^m \n^k \D\phi * \n^i Ric * \n^{k-1-i}\phi * \phi * \phi .
\end{eqnarray*}

Then similarly as before we can derive
\begin{eqnarray*}
II_2
& \leq & \frac{1}{16}\int \eta^m |\n^{k+1} \phi|^2
 + C \int \eta^m |\n^k \phi|^2 + \eta^m |\n^{k-1} Rm|^2 \cr
&     & +\, C \int_{\eta>0} |\n \phi|^2 + |Rm|^2 + |\phi|^2.
\end{eqnarray*}

\begin{eqnarray*}
II_3 & = & \sum_{p=0}^k \sum_{q=0}^{k+1-p} \int \eta^m \langle \n^k \phi, \nabla^{1+p}\phi * \nabla^{q}\phi * \nabla^{k+1-p-q}\phi \rangle \cr
& = & \sum_{p=1}^{k-1} \sum_{q=1}^{k+1-p} \int \eta^m \langle \n^{k-1} \n\phi, \nabla^{p}\n\phi * \nabla^{q-1}\n \phi * \nabla^{k-p-q}\n\phi \rangle \cr
&  & + \sum_{q=1}^k \int \eta^m \langle \nabla^{k-1}\nabla \phi, \nabla \phi * \nabla^{q-1}\nabla \phi * \nabla^{k-q}\nabla \phi\rangle \cr
&  & +  \int \eta^m \langle \n^k \phi, \n^{k+1}\phi * \n \phi * \phi \rangle\cr
&  & +  \sum_{p=1}^{k-1} \int \eta^m \langle \n^k \phi, \nabla^{1+p}\phi * \phi * \nabla^{k+1-p}\phi \rangle\cr
& \leq & \frac{1}{16} \int \eta^m |\n^{k+1} \phi|^2 + C \left( \int \eta^m |\n^k \phi|^2 + \int_{\eta > 0} |\n \phi|^2 + |\phi|^2 \right).
\end{eqnarray*}
The last term
\begin{eqnarray*}
II_5 & = & \sum_{1\leq l \leq k } \int \eta^m \langle \n^k \phi, \nabla^l Ric * \nabla ^{k-l} \phi \rangle \cr
& = & \int m\eta^{m-1}\langle \n \eta \otimes \n^k \phi, \nabla^{k-1} Ric *  \phi \rangle + \eta^m \langle \n^{k+1}\phi, \n^{k-1} Ric * \phi\rangle \cr
&   & + \int \eta^m \langle \n^{k}\phi, \n^{k-1} Ric * \n \phi\rangle+ \sum_{1\leq l \leq k-1 } \int \eta^m \langle \n^k \phi, \nabla^l Ric * \nabla ^{k-l} \phi \rangle \cr
& \leq & \frac{1}{16}\int \eta^m |\n^{k+1} \phi |^2 + C \int \eta^m |\n^k \phi|^2 + C \int_{\eta>0} |\n \phi|^2 + |\phi|^2 \cr
&      & +\, C \int \eta^{m}|\n^{k-1} Ric|^2 + C \int_{\eta>0} |Ric|^2,
\end{eqnarray*}
where we used integral by part in the equation above (hence lost a copy of the cutoff function $\eta$).
Similarly we can estimate
\begin{eqnarray*}
&   & \sum_{0\leq l \leq k-1} \int \eta^m \langle \n^k \phi, \nabla^l Rm * \nabla^{k-l}\phi \rangle \\
& \leq & C \int \eta^m |\n^{k-1} Rm|^2 + \eta^m |\n^k \phi|^2 + C\int_{\eta>0} |Rm|^2 + |\n \phi|^2.
\end{eqnarray*}
\end{proof}

Now we are ready to prove Theorem \ref{l2Estimates}.
%\begin{thm}
%Let $(M^n, g(t), \phi(t))$ be a solution of the spinor flow. For any constants $K>0$, $a>0$, suppose $|Rm|\leq K$, $|\n^2 \phi|\leq K$ and $|\n \phi|^2 \leq K$ on $B_{g(0)}(p, \frac{r}{\sqrt{K}})\times [0,\frac{a}{K}]$, then for any integer $k\geq 0 $ there exists a constant $C(n,a,k,r)$ such that
%\[
%Vol_{g(t)}(B_\frac{r}{2})^{-1} \int_{B_\frac{r}{2}} |\n^k Rm|^2 dv(t)\leq \frac{CK^2}{t^k}
%\]
%\[
%Vol_{g(t)}(B_\frac{r}{2})^{-1} \int_{B_\frac{r}{2}} |\n^{2+k} \phi|^2 dv(t)\leq \frac{CK^2}{t^k}
%\]
%for $t \in(0,\frac{a}{K}]$.
%\end{thm}
\begin{proof}[Proof of Theorem \ref{l2Estimates}]
We can rescale the flow parabolically such that $|Rm|\leq 1$, $|\n^2 \phi|\leq 1$ and $|\n \phi|^2 \leq 1$ on $B_{g(0)}(r) \times [0, a]$, for simplicity we still denote it as $(M, g(t), \phi(t) )$. It is easy to see the volume can be controlled under the flow
\[
e^{-C(n)a}V \leq Vol_{g(t)}(B_{g(0)}(r) ) \leq e^{C(n) a} V,
\]
where $V$ can be taken as $ Vol_{g(s)}(B_{g(0)}(r))$ for any fixed $s\in[0, a]$. Choose a cut-off function $\eta$ supported on $B_{g(0)}(3r/4)$, with $\eta=1$ on $B_{g(0)}(r/2)$ and $|\n \eta|\leq 4/r$.

We will prove the theorem by induction on $k$. When $k=0$ it is trivial. Without loss of generality we can suppose the result hold for $k$ on a larger ball with radius $\frac{3r}{4}$. Take $m= 2k+4$. Define
\begin{equation}\label{eqn: F_i}
F_i (t) = \alpha \int \eta^m |\n^i Rm|^2 dv(t)+ \int \eta^m |\n^{i+2} \phi|^2 dv(t),
\end{equation}
for some constant $\alpha$, $i = 0,1,2,...,k+1$.

By Lemma \ref{lemma: ddt of L2 integral of derivatives of Rm} and \ref{lemma: ddt of L2 integral of derivatives of phi} we can choose $\alpha$ large enough such that
\begin{equation}\label{eqn: derivative of F_i}
\frac{d}{dt} F_k
\leq  - \beta_k F_{k+1} + C_k F_k + C_k V
\end{equation}
for constants $\beta_k$ and $C_k$ depending on $n,a,r$ and $k$. Then let
\[
Q_k =t^{k+1}F_{k+1} + \gamma \sum_{i=0}^k \frac{(k+1)\cdot k\cdot \cdot \cdot (k-i+1)}{\beta_{k}\cdot \beta_{k-1}\cdot \cdot \cdot \beta_{k-i}} t^{k-i} F_{k-i}
\]
where $\gamma$ can be properly chosen such that $k+1+ aC_{k+1}- (k+1)\gamma <0$, hence by induction hypothesis we have
\[
\frac{d}{dt} Q_k \leq C V
\]
for $t\in[0,a]$. We can integrate the above inequality, and rescale the flow to obtain the desired estimates.
\end{proof}

%%%%%%%%%%%%%%%%%%%%%%%%%%%%%%%%%%%%%%%%%%%%%%%%%%%%%%%%%%%%%%%%%%%%%%%%%%%%%%%%%%%%%%%%%%%%%%%%%%%%%%%%%%%%%%%%%%%%%%%%%%%%%%%%%%%%%%%%%%%%%%%%%%%%%%

\section{Long time existence}\label{section: long time existence}
\noindent In this section, we will show that the norm of the second order covariant derivative of the spinor field becoming unbounded is the only obstruction of long-time existence of the spinor flow by proving Theorem \ref{blowUpCondition}. { We will assume that the dimension of the manifold $n\geq 3$, since the result on 2-dimensional surfaces has been shown in \cite{Sc2018}.}

\subsection{Convergence of metrics}\label{Convergence of metrics}
Let $(g(t), \phi(t))$, $t\in[0, T)$, for some $T<\infty$, be a solution to the system (\ref{spinorRicciFlow}) on a closed manifold $M^n$ satisfying \begin{equation*}
\sup\limits_{M\times[0, T)}|\nabla^2 \phi|<\infty.
\end{equation*}
Then by the equation (\ref{eqn: Ricci-derivative-of-spinor}), we have
\begin{equation*}
\sup\limits_{M\times[0, T)}|Ric|<\infty.
\end{equation*}

By Lemma \ref{lemma: second-derivative-control-frist} below, we also have
\begin{equation*}
\sup\limits_{M\times[0, T)}|\nabla \phi|<\infty.
\end{equation*}
Hence by Lemma \ref{estimateOfRm} the norm of Riemannian curvature tensor is bounded
\begin{equation*}
\sup\limits_{M\times[0, T)}|Rm|<\infty.
\end{equation*}

Therefore the left hand side of the metric evolving equation in (\ref{spinorRicciFlow}) is uniformly bounded, i.e. we have
\begin{equation*}
\sup\limits_{M\times[0, T)}\left|\frac{\partial}{\partial t}g\right|<\infty.
\end{equation*}
Thus there exists a constant $C<\infty$ such that
\begin{equation}\label{eqn: metrics-equivalence}
e^{-C}g(0)\leq g(t) \leq e^{C}g(0)
\end{equation}
on $M$ for all $t\in[0, T)$. Moreover, as $t\nearrow T$, the metrics $g(t)$ converge uniformly to a continuous metric $g(T)$ such that
\begin{equation*}
e^{-C}g(0)\leq g(T) \leq e^{C}g(0)
\end{equation*}
on $M$.

As direct consequences of the equivalence of metrics in (\ref{eqn: metrics-equivalence}), we have the following uniform Sobolev inequalities. For any given $1\leq p<n$, there exists a constant $C$ independent of $t$, such that
\begin{equation}\label{eqn: Sobolev-inequality1}
\|f\|_{L^{q}(M, g(t))}\leq C\left(\|\nabla f\|_{L^{p}(M, g(t))}+\|f\|_{L^{p}(M, g(t))}\right),
\end{equation}
for any smooth function $f\in C^{\infty}(M\times[0, T))$, $q\leq \frac{np}{n-p}$, and all $t\in[0, T)$.

Furthermore, for any given $p\geq 1$ satisfying $1-\frac{n}{p}>0$, there exists a constant $C$ independent of $t$, such that
\begin{equation}\label{eqn: Sobolev-inequality2}
\|f\|_{C^{0}(M)}(t)\leq C\left(\|\nabla f\|_{L^{p}(M, g(t))}+\|f\|_{L^{p}(M, g(t))}\right),
\end{equation}
for any smooth function $f\in C^{\infty}(M\times[0, T))$ and all $t\in[0, T)$.

From the $L^2$-estimates in Theorem \ref{l2Estimates}, by using the uniform Sobolev inequality in (\ref{eqn: Sobolev-inequality1}) certain times with $f:=|\nabla^k Rm|$ and $f:=|\nabla^{2+k} \phi|$, and also by Kato's inequality, we can obtain
\begin{equation*}
\sup\limits_{t\in[0, T)}\|\nabla^k Rm\|_{L^{p}(M, g(t))}\leq C,
\end{equation*}
\begin{equation*}
\sup\limits_{t\in[0, T)}\|\nabla^{2+k} \phi\|_{L^{p}(M, g(t))}\leq C,
\end{equation*}
for certain $p$ satisfying $1-\frac{n}{p}>0$. Then the following pointwise estimates follows from the Sobolev inequality in (\ref{eqn: Sobolev-inequality2}).
\begin{lem}\label{lemma: pointwise-estimate}
Let $(g(t), \phi(t))$, $t\in[0, T)$, for some $T<\infty$, be a solution to the system (\ref{spinorRicciFlow}) satisfying $\sup\limits_{M\times[0, T)}|\nabla^2\phi|<\infty$. Then for each non-negative integer $k$, there exists a constant $C_k$, such that
\begin{equation}\label{pointwise-curvature-estimate}
\sup_{M\times[0, T)}|\nabla^k Rm|\leq C_k,
\end{equation}
\begin{equation}\label{pointwise-spinor-estimate}
\sup_{M\times[0, T)}|\nabla^{2+k}\phi|\leq C_k.
\end{equation}
\end{lem}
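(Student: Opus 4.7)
The plan is to combine the $L^{2}$-estimates of Theorem \ref{l2Estimates} with the uniform $C^{0}$-bounds on $|Rm|$, $|\nabla \phi|$ and $|\nabla^{2} \phi|$ already established in the paragraphs preceding the lemma, and then bootstrap these local $L^{2}$-estimates to pointwise estimates via the uniform Sobolev inequalities (\ref{eqn: Sobolev-inequality1})--(\ref{eqn: Sobolev-inequality2}) together with Kato's inequality.

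First I would split the time interval into $[0, \tau]$ and $[\tau, T)$ for some fixed $\tau \in (0, T/2]$. On $[0, \tau]$ the required bounds follow from smoothness of the initial data and standard short-time higher-order regularity for the parabolic system (\ref{spinorRicciFlow}) (equivalently, for the strictly parabolic pull-back used in \cite{AWW2016}). For $t \in [\tau, T)$, I would apply Theorem \ref{l2Estimates} to the time-shifted solution $(g(t-\tau+s), \phi(t-\tau+s))$, $s \in [0, \tau]$; by the preceding discussion the a priori bound $|Rm|, |\nabla^{2}\phi|, |\nabla \phi|^{2} \leq K$ holds uniformly on $M \times [0, T)$, so Theorem \ref{l2Estimates} with $a = K\tau$ yields
\begin{equation*}
\fint_{B(p, r_{0})} |\nabla^{k} Rm|^{2}\, dv_{g(t)} \leq C_{k}, \qquad \fint_{B(p, r_{0})} |\nabla^{2+k}\phi|^{2}\, dv_{g(t)} \leq C_{k},
\end{equation*}
for a fixed radius $r_{0}$ depending only on $K$, every $p \in M$, with $C_{k}$ independent of $p$ and $t$. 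Covering the compact manifold $M$ by finitely many such balls and using the metric equivalence (\ref{eqn: metrics-equivalence}) to compare the volumes of these balls at time $t$ with those at $t=0$, I obtain global $L^{2}$-bounds
\begin{equation*}
\sup_{t \in [0, T)} \int_{M} |\nabla^{k} Rm|^{2}\, dv_{g(t)} \leq C_{k}, \qquad \sup_{t \in [0, T)} \int_{M} |\nabla^{2+k}\phi|^{2}\, dv_{g(t)} \leq C_{k}.
\end{equation*}

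To pass from $L^{2}$ to $L^{p}$ I would invoke Kato's inequality, $|\nabla |\nabla^{k} Rm|| \leq |\nabla^{k+1} Rm|$ in the weak sense, and apply (\ref{eqn: Sobolev-inequality1}) with $p = 2$ and $f = |\nabla^{k} Rm|$; this promotes the joint $L^{2}$-bounds on $\nabla^{k}Rm$ and $\nabla^{k+1}Rm$ to an $L^{q}$-bound on $\nabla^{k}Rm$ with $q = 2n/(n-2)$. Iterating this step finitely many times, using at each stage the $L^{2}$-bounds already obtained for $\nabla^{k+j} Rm$ with $0 \leq j \leq [n/2]+1$, yields an $L^{p}$-bound on $\nabla^{k} Rm$ for some $p$ large enough that $1 - n/p > 0$. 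The pointwise estimate (\ref{pointwise-curvature-estimate}) then follows from (\ref{eqn: Sobolev-inequality2}), and the identical argument applied with $f = |\nabla^{2+k}\phi|$ gives (\ref{pointwise-spinor-estimate}).

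The main technical point I anticipate is ensuring that the constants produced at each stage remain uniform as $t \to T$. This is what makes the preceding work pay off: the Sobolev constants in (\ref{eqn: Sobolev-inequality1})--(\ref{eqn: Sobolev-inequality2}) are uniform in $t$ thanks to (\ref{eqn: metrics-equivalence}), and the constants in Theorem \ref{l2Estimates} depend only on $n, k, r_{0}, \tau$ and the uniform a priori bound $K$, so the bookkeeping in the Moser-type iteration is essentially routine and proceeds as in the classical Ricci flow case.
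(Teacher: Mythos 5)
Your proof is correct and follows essentially the same route the paper takes in the paragraph preceding the lemma: invoke Theorem \ref{l2Estimates} for $L^{2}$-bounds, then bootstrap via the uniform Sobolev inequalities (\ref{eqn: Sobolev-inequality1})--(\ref{eqn: Sobolev-inequality2}) together with Kato's inequality. You are in fact more careful than the paper on two points the text glosses over — the time-shift device to remove the $t^{-k}$ blow-up as $t\to 0^{+}$ and the finite covering needed to globalize the local averaged estimates — and both are handled correctly.
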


From these pointwise estimates, we can obtain the following metrics convergence result, by using essentially the same argument as showing blow up condition for Ricci flow (see details in e.g. \S 7 in Chapter 6 in \cite{CK2004}), except replacing $-2Ric$ by $-\frac{1}{8}R_{jk} - \frac{1}{4}|\nabla \phi|^2 g_{jk}+  \frac{1}{4}\langle \D \phi, e_j \cdot \nabla_{e_k}\phi + e_k \cdot \nabla_{e_j}\phi \rangle$, which and whose derivatives are uniformly bounded by Lemma \ref{lemma: pointwise-estimate}.

\begin{prop}\label{prop: metric-convergence}
Let $(g(t), \phi(t))$, $t\in[0, T)$, for some $T<\infty$, be a solution to the system (\ref{spinorRicciFlow}) satisfying $\sup\limits_{M\times[0, T)}|\nabla^2\phi|<\infty$. Then there exists a smooth metric $g(T)$ on $M^n$ such that $g(t)\rightarrow g(T)$ in any $C^{k}$ norm as $t\nearrow T$.
\end{prop}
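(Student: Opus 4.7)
The plan is to upgrade the $C^{0}$ convergence $g(t)\to g(T)$, established in the discussion immediately above the statement, to convergence in every $C^{k}$-norm. Throughout I use the fixed background metric $g(0)$ and the uniform equivalence (\ref{eqn: metrics-equivalence}) to transfer pointwise norms freely between $g(t)$ and $g(0)$.

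Write the metric equation of (\ref{spinorRicciFlow}) as $\partial_{t} g = h(t)$ where
\[ h_{jk} = -\tfrac{1}{8}R_{jk} - \tfrac{1}{4}|\nabla \phi|^{2} g_{jk} + \tfrac{1}{4}\langle \D \phi,\, e_{j}\cdot \nabla_{e_{k}}\phi + e_{k}\cdot \nabla_{e_{j}}\phi\rangle . \]
By (\ref{eqn: Ricci-derivative-of-spinor}) the Ricci tensor is a contraction of $\phi$ and $\nabla^{2}\phi$, while the remaining summands are contractions of $\phi$, $\nabla\phi$ and $g$, so the pointwise bounds of Lemma \ref{lemma: pointwise-estimate} immediately yield
\[ \sup_{M\times[0,T)} |\nabla^{m} h|_{g(t)} \leq C_{m}, \qquad m = 0,1,2,\ldots, \]
with $\nabla$ the Levi-Civita connection of $g(t)$.

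The main step is to convert these $g(t)$-bounds into bounds against $g(0)$ by induction on $m\geq 0$, establishing
\[ \sup_{t\in[0,T)} \bigl\| g(t) - g(0) \bigr\|_{C^{m}(g(0))} \leq C_{m}'. \]
The case $m=0$ is (\ref{eqn: metrics-equivalence}). For the inductive step set $A(t) := \Gamma(g(t)) - \Gamma(g(0))$, a tensor satisfying
\[ A^{k}_{ij} = \tfrac{1}{2} g^{kl}(t) \bigl( \nabla^{g(0)}_{i} g_{jl}(t) + \nabla^{g(0)}_{j} g_{il}(t) - \nabla^{g(0)}_{l} g_{ij}(t) \bigr), \]
so the inductive hypothesis bounds $A$ and its $g(0)$-covariant derivatives up to order $m-1$. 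Since time and space derivatives of $g$ commute once $g$ is regarded as a tensor field,
\[ \partial_{t} \bigl( \nabla^{g(0)}_{i_{1}\cdots i_{m}} g_{jk} \bigr) = \nabla^{g(0)}_{i_{1}\cdots i_{m}} h_{jk}, \]
and iterating the schematic identity $\nabla^{g(0)} T = \nabla^{g(t)} T + A \ast T$ rewrites the right-hand side as a sum of contractions of $\nabla^{g(t),\,\ell} h$ (for $\ell\leq m$) against $g(0)$-derivatives of $A$ of order $\leq m-1$, all of which are uniformly controlled. Integrating in $t$ and adding the initial data then closes the induction.

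Once the induction is complete, the same time-integration shows that for $0 \leq t_{1} < t_{2} < T$,
\[ \| g(t_{2}) - g(t_{1}) \|_{C^{m}(g(0))} \leq C_{m}''\,(t_{2} - t_{1}), \]
so $\{ g(t) \}_{t\nearrow T}$ is Cauchy in $C^{m}(M, g(0))$ for every $m$ and therefore converges to a limit $g(T) \in C^{\infty}(M)$; positive-definiteness of $g(T)$ follows from (\ref{eqn: metrics-equivalence}). The only nontrivial point is the interchange between $g(t)$- and $g(0)$-covariant derivatives in the inductive step, but this is pure bookkeeping once Lemma \ref{lemma: pointwise-estimate} is in hand---which is exactly why the Chow--Knopf argument for Ricci flow in \S 7 of Chapter 6 of \cite{CK2004} carries over essentially verbatim after replacing $-2\,\mathrm{Ric}$ by $h$.
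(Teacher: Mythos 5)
Your argument is correct and is exactly the one the paper has in mind: the paper simply cites the Chow--Knopf blow-up argument (\S 7 of Chapter 6 of \cite{CK2004}) with $-2\mathrm{Ric}$ replaced by the right-hand side of the metric equation, and you have spelled out the details of that same induction on $C^{m}$-norms against the fixed background $g(0)$, using the uniform bounds from Lemma \ref{lemma: pointwise-estimate}.
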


\subsection{Convergence of spinors}

Now in order to extend the solution $(g(t), \phi(t))$, $t\in[0, T)$, crossing the time $t=T$, we also need to choose a spinor $\phi(T)\in \Gamma(\Sigma_{g(T)}M)$, which will be the limit of $\phi(t)$ in certain sense as $t\nearrow T$.

Let $P_{SO}(t)$, $t\in[0, T]$ denote orthonormal frame bundles with respect to the metric $g(t)$. For each $t\in[0, T]$, there exists a unique $A(t)\in \End(TM)$ that gives a principal $\SO(n)$-bundle isomorphism
\begin{eqnarray*}
A(t): P_{SO}(t) & \longrightarrow & P_{SO}(T) \\
e=(e_1, \cdots, e_n) & \longmapsto &  A(t)e=(A(t)e_1, \cdots, A(t)e_n).
\end{eqnarray*}
In particular, $A(T)=id_{TM}$.

Let $\xi_t: P_{Spin}(t)\rightarrow P_{SO}(t)$, $t\in[0, T]$, be {\em metric} spin structures with respect to the metric $g(t)$ corresponding to a fixed {\em topology} spin structure on $M$. Then the isomorphism $A(t)$ can be lifted to be an isomorphism $\tilde{A}(t): P_{Spin}(t)\rightarrow P_{Spin}(T)$, for each $t\in [0, T)$, and such that there is the following commutative diagram
$$
\xymatrix{
P_{Spin}(t) \ar[d]_{\xi_t} \ar[r]^{\tilde{A}(t)} & P_{Spin}(T)\ar[d]^{\xi_T}\\
P_{SO}(t) \ar[r]^{A(t)} & P_{SO}(T).}
$$

Furthermore, $\tilde{A}(t)$ induce isometries, which are still denoted by $A(t)$, between spinor bundles given by
\begin{eqnarray*}
A(t): \Sigma_{t}M:=P_{Spin}(t)\times_{\mu}\Sigma_n & \longrightarrow &  \Sigma_{T}M:=P_{Spin}(T)\times_{\mu}\Sigma_n \\
\phi=[\tilde{e}, \tilde{\phi}] & \longmapsto &  A(t)\phi=[\tilde{A}(t)\tilde{e}, \tilde{\phi}].
\end{eqnarray*}
As shown in Proposition 2 in \cite{BG1992}, for any fixed point $x\in M$, the isometries $A(t)$ coincide with the fiber-wise parallel transport from $\Sigma_{t, x}M$ to $\Sigma_{T, x}M$ along the path $g_s=(1-s)g(t)+sg(T)$, $0\leq s\leq1$, associated with the Bourguignon-Gauduchon connection for the universal spinor bundle.

Let $\nabla^t$ denote the Levi-Civita connection with respect to the metric $g(t)$ for all $t\in [0, T]$. Define a connection
\begin{equation*}
\overline{\nabla}^{t}:=A(t)^{-1}\circ\nabla^{T}\circ A(t)
\end{equation*}
on the tangent bundle for each $t\in [0, T]$.
This connection is compatible with the metric $g(t)$ and has a torsion, for any $t\in[0, T)$, given  by
\begin{equation*}
\begin{aligned}
\overline{T}^{t}(X, Y)
&:=\overline{\nabla}^{t}_{X}Y-\overline{\nabla}^{t}_{Y}X-[X, Y]\\
&=(\nabla^{T}_{Y}A(t)^{-1})A(t)X-(\nabla^{T}_XA(t)^{-1})A(t)Y.
\end{aligned}
\end{equation*}
Then the difference between $\nabla^{t}$ and $\overline{\nabla}^{t}$ is given in terms of the torsion as
\begin{equation}\label{eqn: connection-difference}
\begin{aligned}
&2g(t)( \overline{\nabla}^{t}_{X}Y-\nabla^{t}_{X}Y, Z)
=g(t)(\overline{T}^{t}(X, Y), Z)\\
&\qquad \qquad \qquad -g(t)(\overline{T}^{t}(X, Z), Y) -g(t)(\overline{T}^{t}(Y, Z), X).
\end{aligned}
\end{equation}

Let $\{e_{1}(t), \cdots, e_{n}(t)\}$ be a local orthonormal frame with respect to $g(t)$, and $\omega(t)$ and $\overline{\omega}(t)$ be the connection 1-forms for $\nabla^{t}$, whose components are
\begin{equation}\label{eqn: connection-1-forms}
\begin{aligned}
\omega(t)_{ij} & =g(t)(\nabla^{t}e_{i}(t), e_{j}(t)),\\
\overline{\omega}(t)_{ij} &=g(t)(\overline{\nabla}^{t}e_{i}(t), e_{j}(t)).
\end{aligned}
\end{equation}

Metric connections $\nabla^{t}$ and $\overline{\nabla}^{t}$ naturally induce connections on the spinor bundle $\Sigma_{t}M$ associated to the metric $g(t)$, which are still denoted by $\nabla^{t}$ and $\overline{\nabla}^{t}$, respectively. Then by the formula for the induced spinor connections, we have
\begin{equation*}
\overline{\nabla}^{t}_{X}\phi(t)-\nabla^{t}_{X}\phi(t)=\frac{1}{4}\sum^{n}_{i,j=1}(\overline{\omega}^{t}_{ij}-\omega^{t}_{ij})(X)e_{i}\cdot e_{j}\cdot\phi(t).
\end{equation*}
Then combining with equations (\ref{eqn: connection-difference}) and $(\ref{eqn: connection-1-forms})$, we have
\begin{equation}\label{eqn: spinor-connection-difference}
(\overline{\nabla}^{t}-\nabla^{t})\phi(t)=\nabla^{T}(A(t)^{-1}) * A(t) * \phi(t).
\end{equation}

\begin{lem}\label{lemma: spinor-l2-estimate}
Let $(g(t), \phi(t))$, $t\in[0, T)$, for some $T<\infty$, be a solution to the system (\ref{spinorRicciFlow}) satisfying $\sup\limits_{M\times[0, T)}|\nabla^2\phi|<\infty$, and $g(T)$ be the limit smooth metric obtained in Proposition \ref{prop: metric-convergence}. Then for each nonnegative integer $k$, there exists a constant $C_{k}$ independent of $t$ such that
\begin{equation*}
\|(\nabla^{T})^{k}(A(t)\phi(t))\|_{L^{2}(\Sigma_{T}M, g(T))}\leq C_{k}.
\end{equation*}
\end{lem}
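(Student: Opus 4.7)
My plan is to reduce the claim to a uniform pointwise bound, which I can then integrate against the $g(T)$-volume form (whose total mass is uniformly bounded by (\ref{eqn: metrics-equivalence})).

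Because $A(t)$ is simultaneously a pointwise isometry of tangent bundles $(TM,g(t))\to(TM,g(T))$ and of spinor bundles $\Sigma_t M\to \Sigma_T M$, and because the induced spinor connection $\overline{\nabla}^t$ really equals $A(t)^{-1}\circ\nabla^T\circ A(t)$ (immediate upon lifting $A(t)$ to $\tilde A(t)$ on spin frames and comparing connection 1-forms), I would first record the pointwise identity
\begin{equation*}
|(\nabla^T)^k (A(t)\phi(t))|_{g(T)} = |(\overline{\nabla}^t)^k \phi(t)|_{g(t)}
\end{equation*}
on $M$. Moreover Proposition \ref{prop: metric-convergence} gives $g(t)\to g(T)$ in every $C^l(M)$; since $A(t)$ is a smooth algebraic function of $g(t)$ and $g(T)$ (essentially the positive square root of the $g(t)$-symmetric operator $B(t)$ defined by $g(T)=g(t)(B(t)\cdot,\cdot)$, lifted to the spin level using the boundary condition $A(T)=\mathrm{id}$), it follows that $A(t)$, $A(t)^{-1}$ and all their $\nabla^T$-covariant derivatives are uniformly bounded on $M\times[0,T)$, as are the Christoffel differences $\nabla^t-\nabla^T$.

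I would then induct on $k$, the case $k=0$ being $|A(t)\phi(t)|_{g(T)}\equiv 1$. For the inductive step, (\ref{eqn: spinor-connection-difference}) writes $\overline{\nabla}^t\phi = \nabla^t\phi + \mathfrak{B}(t) * \phi$ with $\mathfrak{B}(t)$ built out of $A(t),A(t)^{-1}$ and $\nabla^T A(t)^{-1}$, hence uniformly bounded by the previous step; iterating produces
\begin{equation*}
(\overline{\nabla}^t)^k\phi(t) = \sum_{j=0}^{k} \mathcal{P}_{k,j}(t) * (\nabla^t)^j\phi(t),
\end{equation*}
where the tensor-valued coefficients $\mathcal{P}_{k,j}(t)$ are universal polynomials in $A(t),A(t)^{-1}$, their $\nabla^T$-derivatives, and the Christoffel differences, each uniformly bounded pointwise. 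The pointwise estimate (\ref{pointwise-spinor-estimate}) together with $|\phi|\equiv 1$ and the uniform bound on $|\nabla\phi|$ from Lemma \ref{lemma: second-derivative-control-frist} control $|(\nabla^t)^j\phi(t)|_{g(t)}$ uniformly for every $j\geq 0$, thereby giving the required pointwise bound on $|(\overline{\nabla}^t)^k\phi(t)|_{g(t)}$ and hence the $L^2$ estimate.

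The main technical obstacle is the inductive bookkeeping for the coefficients $\mathcal{P}_{k,j}(t)$: at each step one has to verify that terms introduced by commuting $\overline{\nabla}^t$ past lower-order spinor derivatives only involve $\nabla^T$-derivatives of $A(t)$ of order at most $k$, so that the uniform smoothness of $A(t)$ extracted from Proposition \ref{prop: metric-convergence} really suffices. This is routine but requires careful accounting, and it is the only place where the two-sided comparison of $\nabla^t$ and $\nabla^T$ needs to be made quantitative.
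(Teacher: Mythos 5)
Your proposal is correct and follows essentially the same route as the paper: both reduce via the isometry $A(t)$ and the identity $(\overline{\nabla}^{t})^{k}\phi = A(t)^{-1}(\nabla^{T})^{k}(A(t)\phi)$, expand $\overline{\nabla}^{t}=\nabla^{t}+(\overline{\nabla}^{t}-\nabla^{t})$ using (\ref{eqn: spinor-connection-difference}), and control the resulting coefficients through the uniform bounds on $A(t)$, $A(t)^{-1}$ and their derivatives supplied by Proposition \ref{prop: metric-convergence}. The only small deviation is that you pass to a pointwise bound on $|(\overline{\nabla}^{t})^{k}\phi|$ via Lemma \ref{lemma: pointwise-estimate} and then integrate, whereas the paper stays at the $L^{2}$ level and cites Theorem \ref{l2Estimates} directly; both are available and the argument is otherwise identical.
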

\begin{proof}
\begin{eqnarray*}
& &\quad \|(\nabla^{T})^{k}(A(t)\phi(t))\|^{2}_{L^{2}(\Sigma_{T}M, g(T))}\\
& =& \int_{M}|(\nabla^{T})^{k}(A(t)\phi(t))|^2_{g(T)}dv(T)\\
& \leq &  C \int_{M}|A(t)^{-1}(\nabla^{T})^{k}(A(t)\phi(t))|^2_{g(t)}dv(t)\\
& = & C \int_{M}|(\overline{\nabla}^{t})^{k}\phi(t)|^2_{g(t)}dv(t)\\
& = & C \int_{M}|(\nabla^{t}+\overline{\nabla}^{t}-\nabla^{t})^{k}\phi(t)|^2_{g(t)}dv(t)\\
& \leq & C \sum^{k}_{i=0}\int_{M}|(\nabla^{t})^{i}\phi(t)|^2_{g(t)}dv(t)\\
& \leq & C_{k}^2.
\end{eqnarray*}
In above steps the constants C may vary along steps but all of them are independent of $t$. The first inequality follows from the uniform equivalence of metrics $g(t)$ and $g(T)$ for all $t\in[0, T]$. The second inequality follows from the expression (\ref{eqn: spinor-connection-difference}) and the fact $|(\nabla^{t})^{m}\nabla^{T}A(t)^{-1}|$ and $|(\nabla^{t})^{m}A(t)|$ are uniformly bounded for all $t\in[0, T]$, since essentially $A(t)=(g(T)^{-1}g(t))^{\frac{1}{2}}$ and all partial derivatives of $g(t)$ are uniformly bounded (this shall be shown while proving the convergence of metrics). Finally, the last inequality follows from $L^{2}$ estimates in Theorem \ref{l2Estimates}.
\end{proof}

From the $L^{2}$ estimates in Lemma \ref{lemma: spinor-l2-estimate} and standard Sobolev inequality with respect to metric $g(T)$, one immediately obtains uniform pointwise estimates. Furthermore, we have
\begin{prop}\label{prop: spinor-convergence}
Let $(g(t), \phi(t))$, $t\in[0, T)$, for some $T<\infty$, be a solution to the system (\ref{spinorRicciFlow}) satisfying $\sup\limits_{M\times[0, T)}|\nabla^2\phi|<\infty$, and $g(T)$ be the limit smooth metric obtained in Proposition \ref{prop: metric-convergence}. Then there exists a spinor $\phi(T)\in\Gamma(\Sigma_{T}M)$ such that $A(t)\phi(t)\rightarrow \phi(T)$ in any $C^{k}$ norm as $t\nearrow T$.
\end{prop}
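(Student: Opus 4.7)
The plan is to show that the family $\{A(t)\phi(t)\}_{t\in[0,T)}\subset\Gamma(\Sigma_TM)$ is Cauchy in every $C^k$ norm (taken with respect to the fixed smooth metric $g(T)$) as $t\nearrow T$, and to define $\phi(T)$ as the common limit. Smoothness of $\phi(T)$ then follows from $C^k$-convergence for all $k$.

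First I would promote the $L^2$ bounds of Lemma \ref{lemma: spinor-l2-estimate} to uniform $C^k$ bounds. Since $g(T)$ is a single smooth metric on the fixed closed manifold $M$, standard Sobolev embedding applied to the estimates $\|(\nabla^T)^k(A(t)\phi(t))\|_{L^2(g(T))}\le C_k$ yields
\[
\sup_{t\in[0,T)}\|A(t)\phi(t)\|_{C^k(\Sigma_TM,g(T))}\le C_k'
\]
for every nonnegative integer $k$.

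Next I would establish a uniform Lipschitz-in-time estimate by bounding $\partial_t(A(t)\phi(t))$ in each $C^k(g(T))$ norm. Splitting
\[
\partial_t(A(t)\phi(t))=(\partial_tA(t))\phi(t)+A(t)(\partial_t\phi(t)),
\]
the factor $\partial_tA(t)$ is handled through the identification $A(t)=(g(T)^{-1}g(t))^{1/2}$ used in the proof of Lemma \ref{lemma: spinor-l2-estimate}: since the metric evolution equation of (\ref{spinorRicciFlow}) has right-hand side bounded, together with all its covariant derivatives, in every $C^k(g(t))$ by Lemma \ref{lemma: pointwise-estimate}, and since $g(t)\to g(T)$ smoothly by Proposition \ref{prop: metric-convergence}, one obtains uniform $C^k(g(T))$ control of $\partial_tA(t)$. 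For the factor $\partial_t\phi(t)$, the spinor evolution equation in (\ref{spinorRicciFlow}) expresses it polynomially in $\phi$, $\nabla\phi$, $\nabla^2\phi$ and $\nabla_j D\phi$ (itself a second-order quantity in $\phi$); differentiating this expression $k$ times and invoking the pointwise estimates (\ref{pointwise-curvature-estimate}) and (\ref{pointwise-spinor-estimate}) of Lemma \ref{lemma: pointwise-estimate} together with the uniform equivalence $g(t)\sim g(T)$ yields a uniform bound $\|\partial_t(A(t)\phi(t))\|_{C^k(g(T))}\le C_k''$ on $[0,T)$.

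Because $[0,T)$ has finite length, the previous step gives
\[
\|A(t_2)\phi(t_2)-A(t_1)\phi(t_1)\|_{C^k(g(T))}\le C_k''(t_2-t_1)
\]
for all $0\le t_1<t_2<T$, so $A(t)\phi(t)$ is Cauchy in every $C^k(g(T))$ and converges to some $\phi(T)\in\Gamma(\Sigma_TM)$ as $t\nearrow T$. The main obstacle is the bookkeeping in the second step: one has to check that every covariant derivative of the explicit right-hand side of the spinor equation in (\ref{spinorRicciFlow}), including the contributions from the new second-order term $\frac{1}{32}\langle\phi,e_i\cdot\nabla_jD\phi\rangle e_j\wedge e_i\cdot\phi$, is genuinely controlled by the pointwise curvature and spinor-derivative bounds of Lemma \ref{lemma: pointwise-estimate}, and that the same holds for every covariant derivative of $A(t)$ and $\partial_tA(t)$ via the functional-calculus expression $A(t)=(g(T)^{-1}g(t))^{1/2}$.
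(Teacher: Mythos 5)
Your proposal is correct and follows essentially the same route as the paper: pass from the $L^2$ bounds of Lemma \ref{lemma: spinor-l2-estimate} to uniform $C^k$ bounds by Sobolev embedding with respect to the fixed metric $g(T)$, and then obtain $C^k$-Cauchyness by bounding $\partial_t\bigl(A(t)\phi(t)\bigr)$ using the pointwise estimates of Lemma \ref{lemma: pointwise-estimate}, exactly in the spirit of the Ricci-flow extension argument in \cite{CK2004} that the paper invokes for Proposition \ref{prop: metric-convergence}. The one place where a bit more care is required (and which you correctly flag as ``bookkeeping'') is that $\phi(t)\in\Gamma(\Sigma_t M)$ lives in a $t$-dependent bundle, so the naive product rule $\partial_t(A\phi)=(\partial_tA)\phi+A(\partial_t\phi)$ must be interpreted in a fixed local trivialization of the universal spinor bundle, and the flow equation's right-hand side (the Bourguignon--Gauduchon vertical derivative) then differs from the trivialized $\partial_t\phi$ by connection-coefficient terms; these are, however, explicit smooth functions of $g$ contracted with $\partial_t g$ and therefore uniformly bounded under the established estimates, so the Lipschitz-in-time conclusion stands.
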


Now one can solve the system (\ref{spinorRicciFlow}) with the initial data $(g(T), \phi(T))$ obtained in Propositions \ref{prop: metric-convergence} and \ref{prop: spinor-convergence} to extend the flow crossing the time $t=T$. This completes the proof of Theorem \ref{blowUpCondition}.

%%%%%%%%%%%%%%%%%%%%%%%%%%%%%%%%%%%%%%%%%%%%%%%%%%%%%%%%%%%%%%%%%%%%%%%%%%%%%%%%%%%%%%%%%%%%%%%%%%%%%%%%%%%%%%%%%%%%%%%%%%%%%%%%%%%%%%%%%%%%%%%%%%%%%

\section{Lower bound estimate for the existence time}\label{section: lower bound estimate for the existence time}
\noindent First we show the easy fact that $|\n \phi|$ is naturally controlled by $|\n^2 \phi|$ under the normalization $|\phi| \equiv 1$. Hence to obtain a lower bound for the existence time, we only need to control $|\n^2 \phi|$.

\begin{lem}\label{lemma: second-derivative-control-frist}
Let $\phi$ be a spinor field with $|\phi| \equiv 1$ on a domain $\Omega$, then
\[
|\n \phi|_{\infty, \Omega}^2 \leq |\Delta \phi|_{\infty, \Omega}.
\]
\end{lem}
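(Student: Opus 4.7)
The proof is a direct consequence of differentiating the pointwise normalization $|\phi|^2 \equiv 1$ twice and applying Cauchy–Schwarz. The plan is as follows.

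First I would exploit $\langle \phi, \phi\rangle \equiv 1$. Differentiating once in an arbitrary direction $X$ and using the compatibility of the spinor inner product with $\nabla$, one gets $2\langle \nabla_X \phi, \phi\rangle = 0$, so $\nabla_X \phi \perp \phi$ pointwise. Differentiating again, with $X, Y$ two vector fields, yields
\begin{equation*}
0 = XY\langle\phi,\phi\rangle = 2\langle \nabla_X \nabla_Y \phi, \phi\rangle + 2\langle \nabla_Y \phi, \nabla_X \phi\rangle,
\end{equation*}
so $\langle \nabla_X \nabla_Y \phi, \phi\rangle = -\langle \nabla_X \phi, \nabla_Y \phi\rangle$. (This is the same identity already used in Section 3 to simplify the $\tilde{T}$-tensor, namely $\langle \nabla_j \phi, \nabla_k \phi\rangle = -\langle \phi, \nabla_j \nabla_k \phi\rangle$.)

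Second, I would trace over an orthonormal frame $\{e_i\}$ to obtain the pointwise identity
\begin{equation*}
\langle \Delta \phi, \phi \rangle \;=\; \sum_{i=1}^{n} \langle \nabla_{e_i}\nabla_{e_i} \phi, \phi\rangle \;=\; -\sum_{i=1}^n |\nabla_{e_i}\phi|^2 \;=\; -|\nabla \phi|^2,
\end{equation*}
after noting that $\Delta \phi = \sum_i \nabla_{e_i}\nabla_{e_i}\phi - \nabla_{\nabla_{e_i}e_i}\phi$ and the second group of terms pairs with $\phi$ to $0$ by the first derivative identity above. Then Cauchy–Schwarz together with $|\phi|\equiv 1$ gives pointwise
\begin{equation*}
|\nabla \phi|^2 = -\langle \Delta \phi, \phi \rangle \leq |\Delta \phi|\,|\phi| = |\Delta \phi|.
\end{equation*}

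Finally, taking the supremum over $\Omega$ yields $|\nabla \phi|_{\infty,\Omega}^2 \leq |\Delta \phi|_{\infty,\Omega}$, which is the claim. There is no real obstacle here; the only thing to be careful about is choosing a local frame with $\nabla_{e_i} e_j = 0$ at the point of evaluation (or equivalently working with the intrinsic definition of $\Delta$) so that the trace of $\nabla^2\phi$ genuinely is $\Delta \phi$ without spurious connection terms.
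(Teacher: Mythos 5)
Your proof is correct and takes a genuinely more direct route than the paper's. You observe the pointwise identity $\langle \Delta \phi, \phi\rangle = -|\n \phi|^2$ (obtained, as you say, by differentiating $|\phi|^2 \equiv 1$ twice and tracing) and then finish in one line with Cauchy--Schwarz, obtaining the stronger \emph{pointwise} inequality $|\n\phi|^2 \le |\Delta\phi|$ at every point of $\Omega$. The paper instead works with the integrals $\int \eta |\n\phi|^{2k}$, uses $\langle \n\phi,\phi\rangle=0$ together with an integration by parts to absorb one copy of $|\n\phi|^2$ into $\langle\phi,\Delta\phi\rangle$, iterates, and then sends $k\to\infty$ after taking $k$-th roots (with the cut-off $\eta$ handling the restriction from $\Omega$ to $\Omega_{-\eps}$ and a final $\eps\to0$). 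Both ultimately rest on the same differential consequence of the unit-length normalization, but your version dispenses with the cut-off, the iteration, and the $L^{2k}\to L^\infty$ limit, and it makes explicit the sign $\langle\phi,\Delta\phi\rangle = -|\n\phi|^2$ which the paper's displayed chain glosses over (harmlessly, since only absolute values matter there). You are also right to be careful that $\Delta$ is the trace of $\n^2$ so that the frame connection terms pair with $\phi$ to zero via the first-derivative identity; that is exactly the point where a careless normal-coordinate computation could slip. In short: same underlying mechanism, but a cleaner and slightly stronger statement.
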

\begin{proof}
Let $0\leq \eta \leq 1$ by any cut-off function supported on $\Omega$ with $\eta=1$ on $\Omega_{-\epsilon}{ :=\{x\in \Omega \mid d(x, \partial \Omega)> \epsilon\}}$. Since
\[\langle \n \phi, \phi\rangle = 0,\]
we use integral by part to get
\begin{eqnarray*}
\int \eta |\n \phi|^{2k}
& = &\int \eta |\n \phi|^{2k-2} \langle \phi, \Delta \phi\rangle \cr
& \leq  & |\Delta \phi|_{\infty, r} \int \eta |\n \phi|^{2k-2}\cr
& \leq & \cdots \cr
& \leq & |\Delta \phi|_{\infty, r} ^k
\end{eqnarray*}
for any positive interger $k$. Take the k-th root and let $k\to \infty$ yields
\[ |\n \phi|_{\infty, \Omega_{-\epsilon}}^2 \leq |\Delta \phi|_{\infty, \Omega}.\]
Let $\epsilon \to 0$ finishes the proof.
\end{proof}

%%%%%%%%%%%%%%%%%%%%%%%%%%%%%%%%%%
%\begin{thm}
%Let $g(t), \phi(t)$ be a solution of (\ref{spinorRicciFlow}) on a closed manifold $M^n$ with $n\geq 3$. Suppose $\sup_M |Rm|_{g(0)} \leq L K$ and $\sup_M |\n^2 \phi|_{g(0)} \leq K$, there are constants $\Lambda$ and $\delta$ depending on $n$, $L$ and
%\[ V^{-1}\int_M K^{-2-i}|\n^i Rm|^2 dv(g(0)), \quad V^{-1}\int_M K^{-2-i}|\n^{2+i} \phi|^2 dv(g(0)),\]
%where $V=Vol_{g(0)}M$ and $ i = 0,1,2,..., [\frac{n}{2}]+1$, such that the existence time interval contains $[0, \frac{\delta}{K}]$, and we have
%\[
% \sup_{M \times [0, \frac{\delta}{K}]}|\n^2 \phi| \leq \Lambda K .
%\]
%\end{thm}
\begin{proof}[proof of Theorem \ref{lowerBoundForExistenceTime}]
Without loss of generality we can assume $K=1$. For $\Lambda > 2$ to be determined later, suppose $[0,T]$ ($T\leq 1$) is the maximal time interval such that
\[
\quad \sup_{M \times [0, T]}|\n^2 \phi| \leq \Lambda  .
\]
By Lemma \ref{estimateOfRm} we have $|Rm| \leq C(n,L,\Lambda)$ on $M\times[0,T]$ (note that we can take $\eta\equiv 1$ in its proof since $M$ is closed).

Let $F_i(t)$ be the quantities defined in (\ref{eqn: F_i}). Let $k=\left[\frac{n}{2}\right]+1$, and set
\[
P_k(t) = F_k(t) + \sum_{i=1}^{k} \frac{C_k C_{k-1}...C_{k+1-i}}{\beta_{k-1}\beta_{k-2}...\beta_{k-i}}F_{k-i}(t),
\]
\[
P_{k-1}(t) = F_{k-1}(t) + \sum_{i=1}^{k-1} \frac{C_{k-1}...C_{k-i}}{\beta_{k-2}...\beta_{k-1-i}}F_{k-1-i}(t),
\]
where the constants are from (\ref{eqn: derivative of F_i}). By  (\ref{eqn: derivative of F_i}) we have
\[
P_k(t) \leq P_k(0) + C(n, L, \Lambda)V t,
\]
and
\[
P_{k-1}(t) \leq P_{k-1}(0) + C(n, L, \Lambda)V t.
\]
Recall here $V=\Vol_{g(0)}M$. In particular,
\[
\int|\n^{k+2}\phi|^{2}dv(t) \leq P_{k}(0) + C(n, L, \Lambda)V t,
\]
and
\[
\int|\n^{k+1}\phi|^{2}dv(t) \leq P_{k-1}(0) + C(n, L, \Lambda)V t.
\]

Moreover, by setting $k=0$ in (\ref{eqn: derivative of F_i}), dropping the term $-\beta_{0}F_{1}$ on the right hand side of the inequality, and then solving the differential inequality, one can easily get
\[
F_{0}(t)\leq F_{0}(0) + (e^{C_{0}t}-1)V.
\]
In particular,
\[
\int|\n^{2}\phi|^{2}dv(t) \leq F_{0}(0) + (e^{C_{0}t}-1)V.
\]

Since the Sobolev constants are uniform along the flow up to a factor $e^{C(n, L, \Lambda)t}$, we can use Lemmas \ref{lemma: lp interpolation} and \ref{lemma: multiplicative sobolev inequality} to get pointwise estimate for $|\n^{2}\phi|$ as follows. In the rest of the proof, we use $C$ to denote constants only depending on $n$, $C_{1}$ only depending on $n$ and the initial metric $g(0)$, and $C_{2}$ also depending on $\Lambda$. In the following derivations, constants $C$, $C_{1}$, and $C_{2}$ may vary along steps.

By setting $i=1$, $\eta\equiv1$, and $A=\n \phi$ in Lemma \ref{lemma: lp interpolation}, we have
\begin{eqnarray*}
&      & \|\n^2\phi\|_{L^{2k}(g(t))}\cr
& \leq & C\left(|\n \phi|_{\infty}(t)\right)^{1-\frac{1}{k}}\left(\|\n^{k+1}\phi\|_{L^{2}(g(t))} + \|\n \phi\|_{L^{2}(g(t))}\right)^{\frac{1}{k}}\cr
& \leq & C\left(|\n^{2} \phi|_{\infty}(t)\right)^{1-\frac{1}{k}}\left(\|\n^{k+1}\phi\|_{L^{2}(g(t))} + \left(\int|\Delta\phi|dv(t)\right)^{\frac{1}{2}}\right)^{\frac{1}{k}}\cr
& \leq & C\left(|\n^{2} \phi|_{\infty}(t)\right)^{1-\frac{1}{k}}\left(\|\n^{k+1}\phi\|_{L^{2}(g(t))} + \left(\int\sqrt{n}|\n^{2}\phi|dv(t)\right)^{\frac{1}{2}}\right)^{\frac{1}{k}}\cr
& \leq & C\left(|\n^{2} \phi|_{\infty}(t)\right)^{1-\frac{1}{k}}\left(\|\n^{k+1}\phi\|_{L^{2}(g(t))} + \sqrt{nV}e^{C_{2}t}+\|\n^{2}\phi\|_{L^{2}(g(t))}\right)^{\frac{1}{k}}.\cr
\end{eqnarray*}
Similarly, by setting $A=\n^{2}\phi$ in Lemma \ref{lemma: lp interpolation}, we have
\begin{equation*}
\|\n^{3}\phi\|_{L^{2k}(g(t))}\leq C \left(|\n^{2}\phi|_{\infty}(t)\right)^{1-\frac{1}{k}}\left(\|\n^{k+2}\phi\|_{L^{2}(g(t))} + \|\n^{2}\phi\|_{L^{2}(g(t))}\right)^{\frac{1}{k}}
\end{equation*}

By setting $p=2k$ and $u = |\n^{2}\phi|$ in Lemma \ref{lemma: multiplicative sobolev inequality}, and combining with the above estimates, we have
\begin{eqnarray*}
&      & \left(|\n^{2}\phi|_{\infty}(t)\right)^{\frac{2k}{\alpha}}\cr
& \leq & C_{1}e^{C_{2}t}(C_{S}(g(t)))^{2k}\left(\|\n^{2}\phi\|_{L^{m}(g(t))}\right)^{\frac{2k}{\alpha}-2k}\Big(\|\n^{3}\phi\|^{2k}_{L^{2k}(g(t))}\cr
&      & +\, C_{1}e^{C_{2}t}\|\n^{2}\phi\|^{2k}_{L^{2k}(g(t))}\Big)\cr
& \leq &  C_{1}e^{C_{2}t}(1+e^{C_{2}t})(C_{S}(g(t)))^{2k}V^{\frac{1}{m}\left(\frac{2k}{\alpha}-2k\right)}
\left(|\n^{2}\phi|_{\infty}(t)\right)^{\frac{2k}{\alpha}-2}\Big( \int|\n^{k+2}\phi|^{2}dv(t)\cr
&      & +\int|\n^{k+1}\phi|^{2}dv(t) + \int|\n^{2}\phi|^{2}dv(t) + nVe^{C_{2}t}\Big)\cr
& \leq &  C_{1}e^{C_{2}t}(1+e^{C_{2}t})(C_{S}(g(t)))^{2k}V^{\frac{1}{m}\left(\frac{2k}{\alpha}-2k\right)}
\left(|\n^{2}\phi|_{\infty}(t)\right)^{\frac{2k}{\alpha}-2}( P_{k}(0) + P_{k-1}(0)\cr
&      & +\, F_{0}(0) + C_{2}Vt + (e^{C_{0}t}-1)V + nVe^{C_{2}t}).
\end{eqnarray*}
Then dividing by $\left(|\n^{2}\phi|_{\infty}(t)\right)^{\frac{2k}{\alpha}-2}$, we have
\begin{eqnarray*}
(|\n^{2}\phi|_{\infty}(t))^{2}
& \leq & C_{1}e^{C_{2}t}(1+e^{C_{2}t})(C_{S}(g(t)))^{2k}V^{\frac{1}{m}\left(\frac{2k}{\alpha}-2k\right)}( P_{k}(0) + P_{k-1}(0)\\
&      & +\, F_{0}(0) + C_{2}Vt + (e^{C_{0}t}-1)V + nVe^{C_{2}t}).
\end{eqnarray*}
Recall that $C_{S}(g(0))=C_{1}V^{-\frac{1}{n}}$, Sobolev constants $C_{S}(g(t))$ are uniform along the flow up to a factor $e^{C_{2}t}$, and $\frac{1}{\alpha}=\left(\frac{1}{n}-\frac{1}{2k}\right)m+1$, thus
\begin{eqnarray*}
(|\n^{2}\phi|_{\infty}(t))^{2}
& \leq & C_{1}e^{C_{2}t}(1+e^{C_{2}t})V^{\frac{1}{m}\left(\frac{2k}{\alpha}-2k\right)-\frac{2k}{n}}( P_{k}(0) + P_{k-1}(0)\\
&      & +\, F_{0}(0) + C_{2}Vt + (e^{C_{0}t}-1)V + nVe^{C_{2}t})\\
& \leq & C_{1}e^{C_{2}t}(1+e^{C_{2}t})V^{-1}( P_{k}(0) + P_{k-1}(0)\\
&      & +\, F_{0}(0) + C_{2}Vt + (e^{C_{0}t}-1)V + nVe^{C_{2}t})\\
& \leq & C_{1}e^{C_{2}t}(1+e^{C_{2}t})( V^{-1}P_{k}(0) + V^{-1}P_{k-1}(0)\\
&      & +\, V^{-1}F_{0}(0) + C_{2}t + (e^{C_{0}t}-1) + ne^{C_{2}t})
\end{eqnarray*}

Now choose a sufficiently large $\Lambda$ such that
\begin{equation*}
\Lambda^{2}> \max\{4 C_{1}( V^{-1}P_{k}(0) + V^{-1}P_{k-1}(0)+ V^{-1}F_{0}(0) + n ), 4\}.
\end{equation*}
Then we can take $\delta$ such that
\begin{eqnarray*}
\Lambda^{2}
& = & C_{1}e^{C_{2}\delta}(1+e^{C_{2}\delta})( V^{-1}P_{k}(0) + V^{-1}P_{k-1}(0) + V^{-1}F_{0}(0)\\
&   & +\, C_{2}\delta + (e^{C_{0}\delta}-1) + ne^{C_{2}\delta}).
\end{eqnarray*}
This complete the proof.
\end{proof}

%%%%%%%%%%%%%%%%%%%%%%%%%%%%%%%%%%%%%%%%%%%%%%%%%%%%%%%%%%%%%%%%%%%%%%%%%%%%%%%%%%%%%%%%%%%%%%%%%%%%%%%%%%%%%%%%%%%%%%%%%%%%%%
\section{Appendix: Interpolation lemma}\label{section: appendix}
\noindent The following interpolation lemma was proved in the Appendix of \cite{KS2002}.
\begin{lem}\label{lemma: lp interpolation}
For integers $k>0$, $1\leq i\leq k$ and $m\geq 2k$ we have the inequality
\[
\left( \int \eta^m |\n^i A|^{\frac{2k}{i}} \right)^\frac{i}{2k} \leq C |A|_\infty^{1-\frac{i}{k}} \left( \left(\int \eta^m |\n^k A|^2\right)^\frac{1}{2} + |A|_{L^2, \eta>0}\right)^{\frac{i}{k}},
\]
where the constant $C$ depends only on $n, m, k$ and $|\n \eta|_\infty$.
\end{lem}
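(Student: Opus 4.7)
The plan is to establish this inequality via an iterated weighted Gagliardo--Nirenberg argument, following the strategy of Kuwert--Schätzle in \cite{KS2002}. The heart of the matter is a ``log-convexity'' estimate relating $L^{2k/i}(\eta^{m})$-norms of consecutive derivatives; iterating this from $i=k$ (where the bound is trivial) down to $i=1$, and closing at the bottom with the $L^\infty$ bound on $A$, produces the full interpolation in the claimed form $|A|_\infty^{1-i/k}\cdot(\cdots)^{i/k}$.

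The key technical step is an integration by parts. Writing
\begin{equation*}
\int \eta^m |\n^i A|^{2k/i} \;=\; \int \eta^m |\n^i A|^{2k/i - 2} \, \langle \n^i A,\; \n(\n^{i-1}A) \rangle,
\end{equation*}
and moving the outer $\n$ off the factor $\n^{i-1}A$ via the product rule, together with Kato's inequality $|\n|\n^i A|| \leq |\n^{i+1}A|$, yields
\begin{equation*}
\int \eta^m |\n^i A|^{2k/i} \;\leq\; C \int \eta^m |\n^{i-1}A| \,|\n^i A|^{2k/i - 2}\,|\n^{i+1}A| \;+\; C \int \eta^{m-1}|\n\eta| \,|\n^{i-1}A| \,|\n^i A|^{2k/i - 1}.
\end{equation*}
Apply Hölder's inequality to the main term on the right with exponents $\tfrac{2k}{i-1}$, $\tfrac{k}{k-i}$, $\tfrac{2k}{i+1}$ (these satisfy $\tfrac{i-1}{2k}+\tfrac{k-i}{k}+\tfrac{i+1}{2k}=1$), distributing the weight $\eta^m$ accordingly; then absorb the resulting factor $\bigl(\int \eta^m|\n^i A|^{2k/i}\bigr)^{(k-i)/k}$ back into the left-hand side. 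The cutoff remainder involving $|\n\eta|$ is handled by an analogous Hölder plus Young absorption. The outcome is the discrete log-convexity inequality
\begin{equation*}
\Bigl(\int \eta^m |\n^i A|^{2k/i}\Bigr)^{i/k} \;\leq\; C \Bigl(\int \eta^m |\n^{i-1}A|^{2k/(i-1)}\Bigr)^{(i-1)/(2k)} \Bigl(\int \eta^m |\n^{i+1}A|^{2k/(i+1)}\Bigr)^{(i+1)/(2k)} \;+\; (\text{cutoff remainders}).
\end{equation*}
Iterating from $i=k-1$ down to $i=1$, at the top the iteration picks up the factor $\bigl(\int\eta^m|\n^k A|^2\bigr)^{1/2}$ raised to the power $i/k$, and at the bottom the factor $|A|_\infty$ (replacing $|\n^0 A|=|A|$ with its $L^\infty$ bound when the next step would require $\|A\|_{L^{\infty}}$ to avoid $\|A\|_{L^{2k}}$) to the power $1-i/k$, exactly as claimed.

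The main technical obstacle will be the careful bookkeeping of the cutoff $\eta$. Each integration by parts costs one power of $\eta$ via the boundary-like term $\eta^{m-1}|\n\eta|$, and the iteration requires on the order of $k$ such steps; the hypothesis $m \geq 2k$ is precisely what guarantees that at least one power of $\eta$ is retained in every integrand along the way, so that no integral is ever taken over an enlarged set. The unweighted tail $|A|_{L^2,\eta>0}$ appearing on the right-hand side of the lemma materialises naturally in this process: at the very last step of the iteration the cutoff weight is exhausted, and the residual boundary integral is dominated by $\int_{\eta>0}|A|^2$ together with a bounded factor from $|\n\eta|_\infty$. The rest of the argument is routine Young-inequality absorption.
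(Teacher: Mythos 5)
The paper does not prove this lemma; it simply cites the Appendix of \cite{KS2002} (Kuwert--Schätzle), and your reconstruction follows that source's strategy of iterated integration by parts, Hölder with weights, and Young absorption, leading to a discrete log-convexity estimate for $a_j := \bigl(\int\eta^m|\n^j A|^{2k/j}\bigr)^{j/(2k)}$, with $a_0$ replaced by $|A|_\infty$ and $a_k = \bigl(\int\eta^m|\n^k A|^2\bigr)^{1/2}$. That is the right skeleton and the proof does go through along these lines, so I would call it essentially the same approach as the cited reference.

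Two points are under-explained and a careful writeup must address them. First, your heuristic for the hypothesis $m\geq 2k$ (``each IBP costs one power of $\eta$ and the iteration takes $k$ steps'') is not what actually happens: after a single integration by parts the cutoff remainder carries $\eta^{m-1}$, and the constraint $m\geq 2k$ appears already when you redistribute that weight in a three-factor Hölder with exponents $\tfrac{2k}{i-1}$, $\tfrac{2k}{2k-i}$, $2k$ — assigning $\eta^m$ to the two derivative factors leaves $\eta^{(m-2k)/(2k)}$ for the $L^{2k}$ slot, and only $m\geq 2k$ makes that a bounded weight. Second, the unweighted tail $|A|_{L^2,\eta>0}$ does not arise solely ``at the very last step''; the cutoff remainders at every level $i\geq 2$ contribute $|\Omega_\eta|$-weighted products of $a_{i-1}$, $a_i$, and one needs to check (via Young's inequality and the scaling of the $a_j$'s) that these terms are absorbed consistently into the final right-hand side, with the genuine new term $|A|_{L^2,\eta>0}$ produced at $i=1$ by the estimate $\bigl(\int_{\eta>0}|A|^{2k}\bigr)^{1/(2k)}\leq |A|_\infty^{1-1/k}\,|A|_{L^2,\eta>0}^{1/k}$. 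Once this bookkeeping is spelled out, the argument closes.
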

As a corollary we have the following lemma which is needed in our derivation of $L^2$ derivative estimates.
\begin{lem}\label{lemma: l2 interpolation}
Let $1 \leq i_1, i_2, ..., i_l\leq k$ and $i_1+ i_2 + ... + i_l = 2k$, $m \geq 2k$. Then for $($vector valued$)$ tensors $A_1, A_2, ..., A_r$ and a cutoff function $\eta$, we have
\begin{eqnarray*}
&    & \int \eta^m \n^{i_1}A_1 * \n^{i_2}A_2 * ... * \n^{i_l}A_l * A_{l+1}* ... * A_r \\
& \leq & C \left( \prod_{i=l+1}^r |A_i|_\infty \right) \sum_{j=1}^l |A_j|_{\infty}^{\frac{2k}{i_j} - 2}\left( \int \eta^m |\n^k A_j|^2 + \int_{\eta>0} |A_j|^2\right)
\end{eqnarray*}
where the constant $C$ depends only on $n, m, k, r $ and $|\n \eta|_\infty$.
\end{lem}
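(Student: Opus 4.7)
\medskip

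\noindent\textbf{Proof proposal for Lemma \ref{lemma: l2 interpolation}.} The plan is to reduce the statement to Lemma \ref{lemma: lp interpolation} by a Hölder splitting followed by a weighted Young's inequality. First, I would pull out the uniform bounds of the undifferentiated factors $A_{l+1},\ldots,A_r$ at the outset, since these contribute only the harmless prefactor $\prod_{i=l+1}^{r}|A_i|_{\infty}$. The remaining task is to estimate
\[
J \,:=\, \int \eta^m \,|\n^{i_1}A_1|\cdots|\n^{i_l}A_l|\,,
\]
under the balance condition $i_1+\cdots+i_l=2k$.

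Next, since $\sum_j i_j/(2k)=1$, I would write $\eta^m = \prod_j \eta^{m\, i_j/(2k)}$ and apply Hölder's inequality with exponents $p_j = 2k/i_j$ (so $\sum_j 1/p_j = \sum_j i_j/(2k) = 1$). The chosen weights align perfectly so that each resulting factor keeps a full copy of $\eta^m$:
\[
J \,\le\, \prod_{j=1}^l \left( \int \eta^m |\n^{i_j} A_j|^{2k/i_j} \right)^{i_j/(2k)}.
\]
Now Lemma \ref{lemma: lp interpolation} applies to each factor (with $A=A_j$, $i=i_j$), giving
\[
\left( \int \eta^m |\n^{i_j} A_j|^{2k/i_j} \right)^{i_j/(2k)} \,\le\, C\, |A_j|_\infty^{\,1-i_j/k}\, C_j^{\,i_j/k},
\]
where I abbreviate $C_j := \left(\int \eta^m |\n^k A_j|^2\right)^{1/2} + |A_j|_{L^2,\eta>0}$. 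Multiplying these bounds gives $J \le C \prod_j |A_j|_{\infty}^{1-i_j/k} C_j^{i_j/k}$.

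Finally, I would convert this product into the desired sum by a Young inequality tuned so that each $C_j$ appears squared. Set $B_j := |A_j|_{\infty}^{1-i_j/k} C_j^{i_j/k}$ and use the exponents $p_j = 2k/i_j$ again; since $\sum_j 1/p_j =1$, Young gives $\prod_j B_j \le \sum_j (i_j/2k)\, B_j^{p_j}$. A direct computation shows $B_j^{p_j} = |A_j|_{\infty}^{\,2k/i_j-2}\, C_j^{\,2}$, and $C_j^2 \le 2\bigl(\int\eta^m|\n^k A_j|^2 + \int_{\eta>0}|A_j|^2\bigr)$. Combining everything yields exactly the claimed bound. The only genuinely nontrivial step is the bookkeeping that makes the Hölder exponents, the power $i_j/k$ from Lemma \ref{lemma: lp interpolation}, and the Young exponents all compatible with the constraint $\sum_j i_j = 2k$; once this is set up, the estimate is routine, and I do not anticipate any substantive obstacle.
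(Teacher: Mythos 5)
Your proposal is correct and follows essentially the same route as the paper's own proof: pull out the sup-norms of the undifferentiated factors, apply H\"older with exponents $2k/i_j$ (so that each factor retains a full $\eta^m$), invoke Lemma~\ref{lemma: lp interpolation} on each factor, and finish with Young's inequality using the same exponents. The paper's version is written more tersely (it does not display the H\"older splitting or the $B_j^{p_j}$ computation), but the steps and exponents are identical to yours.
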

\begin{proof}
\begin{eqnarray*}
&    & \int \eta^m \n^{i_1}A_1 * \n^{i_2}A_2 * ... * \n^{i_l}A_l * A_{l+1}* ... * A_r \cr
& \leq & C \left( \prod_{i=l+1}^r |A_i|_\infty \right) \prod_{j=1}^l \left( \int \eta^m |\n^{i_j} A_j|^{\frac{2k}{i_j}} \right)^\frac{i_j}{2k} \cr
& \leq & C \left( \prod_{i=l+1}^r |A_i|_\infty \right) \prod_{j=1}^l |A_j|^{1-\frac{i_j}{k}}\left( \int \eta^m |\n^k A_j|^2 + \int_{\eta>0} |A_j|^2 \right)^{\frac{i_j}{2k}} \cr
& \leq & C \left( \prod_{i=l+1}^r |A_i|_\infty \right) \sum_{j=1}^l |A_j|_{\infty}^{\frac{2k}{i_j} - 2}\left( \int \eta^m |\n^k A_j|^2 + \int_{\eta>0} |A_j|^2\right)
\end{eqnarray*}
where in the last step we used Young\rq{}s inequality.
\end{proof}

Recall that (\cite{Sa1992}) on any complete Riemannian manifold with $Ric \geq -K$ we have a Sobolev inequality
\begin{equation}\label{Sobolev inequality}
\left( \int_{B(r)} u^{\frac{2\mu}{\mu-2}}\right)^\frac{\mu-2}{\mu} \leq C(n) \frac{r^2 e^{\sqrt{K}r}}{V(r)^\frac{2}{\mu}} \int_{B(r)} (|\n u |^2 + r^{-2} u^2),
\end{equation}
for any $C^1$ function compactly supported on a geodesic ball $B(r)$, where $\mu=n$ when $n\geq 3$ and $2<\mu< \infty$ when $n=2$.  Using this Sobolev inequality we can prove a multiplicative version by the same argument as in \cite{KS2002} (Theorem 5.6).
\begin{lem}\label{lemma: multiplicative sobolev inequality}
Let $u \in C_c^1(B(r))$, where $B(r)$ is a geodesic ball with radius $r$ on a $n$-dimensional Riemannian manifold with $Ric \geq -K$. For any $p > n$, $m\geq 0$ and $0 < \alpha \leq 1$ with $\frac{1}{\alpha} = (\frac{1}{n} - \frac{1}{p})m +1$, there is a constant $C$ depending only on $n, m,p$ such that
\[
|u|_\infty \leq C C_S^\alpha |u|_{L^m}^{1-\alpha}(|\n u|_{L^p} + { r^{-1}}|u|_{L^p})^ \alpha,
\]
where $C_S =C(n) \frac{{ r e^{\sqrt{K}r/2}}}{V(r)^\frac{1}{n}}$.
\end{lem}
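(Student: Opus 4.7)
I will prove this by Moser iteration from the $L^{2}$-Sobolev inequality (\ref{Sobolev inequality}), following the scheme of Theorem 5.6 in \cite{KS2002}. The key point is that $p>n$ makes the iteration terminate in a finite $L^{\infty}$-norm, and the exponent bookkeeping reproduces exactly the stated $\alpha$.

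Test (\ref{Sobolev inequality}) against $v_{k}=|u|^{\beta_{k}}$, then bound $\int u^{2\beta_{k}-2}|\nabla u|^{2}$ by H\"older with conjugate exponents $(p/(p-2),p/2)$, and similarly bound $r^{-2}\int u^{2\beta_{k}}$ by splitting $u^{2\beta_{k}}=u^{2\beta_{k}-2}\cdot u^{2}$ and applying H\"older. Both reduce to a common factor $\bigl(\int u^{q_{k}}\bigr)^{(p-2)/p}$ times, respectively, $|\nabla u|_{L^{p}}^{2}$ and $r^{-2}|u|_{L^{p}}^{2}$. Writing $X:=|\nabla u|_{L^{p}}+r^{-1}|u|_{L^{p}}$, this yields the Moser recursion
\begin{equation*}
|u|_{L^{q_{k+1}}}\leq (CC_{S}\beta_{k})^{1/\beta_{k}}\,X^{1/\beta_{k}}\,|u|_{L^{q_{k}}}^{1-1/\beta_{k}},
\end{equation*}
with exponents linked by $q_{k}=(2\beta_{k}-2)\tfrac{p}{p-2}$ and $q_{k+1}=\tfrac{2\mu}{\mu-2}\beta_{k}$, equivalently $\beta_{k+1}=1+\tfrac{\mu(p-2)}{p(\mu-2)}\beta_{k}$. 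Since $p>n\geq\mu$, the ratio $\tfrac{\mu(p-2)}{p(\mu-2)}>1$, so $\beta_{k}\to\infty$ geometrically and $\sum_{k}\beta_{k}^{-1}<\infty$.

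Iterating and taking logarithms yields
\begin{equation*}
|u|_{\infty}\leq C(n,p)\,C_{S}^{B_{\infty}}\,X^{B_{\infty}}\,|u|_{L^{q_{0}}}^{C_{\infty}},
\end{equation*}
where $B_{\infty}=\sum_{k\geq 0}\beta_{k}^{-1}\prod_{j>k}(1-\beta_{j}^{-1})$ and $C_{\infty}=\prod_{k\geq 0}(1-\beta_{k}^{-1})$. A short induction shows $B_{N}+C_{N}=1$ at every finite step, so $B_{\infty}+C_{\infty}=1$. Fixing $\beta_{0}=1+\tfrac{m(p-2)}{2p}$ so that $q_{0}=m$, the value of $B_{\infty}$ is determined by the recursion; matching against the scaling $u(\cdot)\mapsto u(\lambda\cdot)$, under which the stated inequality is homogeneous iff $\tfrac{1}{\alpha}=(\tfrac{1}{n}-\tfrac{1}{p})m+1$, forces $B_{\infty}=\alpha$ and $C_{\infty}=1-\alpha$, yielding the lemma.

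The only real obstacle is bookkeeping: one must check that the H\"older rearrangement in the iteration step produces the \emph{same} factor $\bigl(\int u^{q_{k}}\bigr)^{(p-2)/p}$ for both the gradient and zero-order contributions, so that $C_{S}$ and $X$ appear with the same exponent $B_{\infty}$; and then verify that the series defining $B_{\infty}$ sums to the scaling exponent $\alpha$. Both points are elementary algebra once the identity $\tfrac{1}{\alpha}=(\tfrac{1}{n}-\tfrac{1}{p})m+1$ is imposed, and no analytic input beyond (\ref{Sobolev inequality}) is needed.
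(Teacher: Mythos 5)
Your Moser iteration is the right approach and, for $n\geq 3$, the details check out: testing \eqref{Sobolev inequality} on $v_k=|u|^{\beta_k}$ and splitting both $\int u^{2\beta_k-2}|\n u|^2$ and $r^{-2}\int u^{2\beta_k}$ via H\"older with exponents $(\tfrac{p}{p-2},\tfrac{p}{2})$ does produce the common factor $(\int u^{q_k})^{(p-2)/p}$ with $q_k=(2\beta_k-2)\tfrac{p}{p-2}$, giving exactly your recursion $\beta_{k+1}=1+\tfrac{\mu(p-2)}{p(\mu-2)}\beta_k$ with ratio $\rho>1$ whenever $p>\mu$; the identity $B_N+C_N=1$ holds by the induction you describe; and the product $\prod_k\beta_k^{1/\beta_k}$ converges because $\beta_k$ grows geometrically. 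This is precisely the argument of Theorem~5.6 in \cite{KS2002} that the paper invokes, so the route is not merely ``essentially the same'' --- it is the same.

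Two small points worth tightening. First, your final step identifies $B_\infty=\alpha$ by a Euclidean scaling heuristic; on a Riemannian manifold this is not quite a proof, and it is cleaner (and just as short) to compute $C_\infty$ directly: using $\beta_{k}-1=\rho\beta_{k-1}$ one gets the telescoping product
\[
C_N=\prod_{k=0}^{N-1}\frac{\beta_k-1}{\beta_k}=(\beta_0-1)\frac{\rho^{N-1}}{\beta_{N-1}}\xrightarrow[N\to\infty]{}\frac{\beta_0-1}{\beta_0+\frac{1}{\rho-1}}=\frac{m(p-\mu)}{p\mu+m(p-\mu)},
\]
which, for $\mu=n$, gives $1-C_\infty=\alpha$ with $\frac{1}{\alpha}=1+m(\tfrac1n-\tfrac1p)$, as required. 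Second, the sentence ``Since $p>n\geq\mu$'' is off when $n=2$: there $\mu>2=n$ and one instead \emph{chooses} $2<\mu<p$; also in that case $V(r)^{-2/\mu}\neq V(r)^{-2/n}$, so the constant $C_S$ as written only matches for $n\geq3$, which is the regime in which the lemma is actually used in the paper (Theorem~\ref{lowerBoundForExistenceTime} assumes $n\geq3$). Neither issue affects correctness in the relevant range.
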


\section{Acknowledgements} C. Wang gratefully acknowledges the support and wonderful working
condition of the Max Planck Institute for Mathematics in Bonn. Both authors would like to thank Professor McKenzie Wang for his interests on this note and his encouragement.


\begin{thebibliography}{XXXXX9}
\bibitem[AWW16]{AWW2016} B. Ammann, H. Weiss, F. Witt, {\sl A spinorial energy functional: critical points and gradient flow.} Math. Ann. (2016), 365: 1559-1602.
\bibitem[AWW16(2)]{AWW2016(2)} B. Ammann B , H. Weiss, F. Witt, {\sl The spinorial energy functional on surfaces.} Mathematische Zeitschrift, (2016), 282(1-2): 177-202.
\bibitem[BFGK91]{BFGK91} H. Baum, Th. Friedrich, R. Graunwald, I. Kath, {\sl Twistors and Killing spinors on Riemannian manifolds}, B.G. Teubner Verlagsgessellschaft, Stuttgart, Leipzig. (1991).
\bibitem[BGM05]{BGM05} C. B\"ar, P. Gauduchon, A. Moroianu, {\sl Generalized cylinders in semi-Riemannian and spin geometry}, Math. Z. {\bf 249}(3), (2005), 545-580.
\bibitem[BG92]{BG1992} J.-P. Bourguigon, P. Gauduchon, {\sl Spineurs, Op\'erateurs de Dirac et Variations de M\'etriques.} Commun. Math. Phys. {\bf 144} (1992), 581-599.
\bibitem[CK04]{CK2004} B. Chow, D. Knopf, {\sl The Ricci flow: An introduction.} Mathematical Survey and Monographs, vol. 110 (2004).
\bibitem[FSW18]{FSW2018} M. Freibert, L. Schiemanowski, H. Weiss, {\sl Homogeneous spinor flow.} arXiv preprint: 1811.02495, (2018).
\bibitem[Fri00]{Fri00} T. Friedrich, {\sl Dirac operator in Riemannian geometry}, Graduate Studies in Mathematics 25. AMS, Providence (2000).
\bibitem[KNM16]{KMW2016} B. Kotschwar, O. Munteanu, J. Wang, {\sl A local curvature estimate for the Ricci flow.} Journal of Functional Analysis , (2016) , 271 (9) :2604-2630.
\bibitem[KS02]{KS2002} E. Kuwert, R. Schatzle, {\sl Gradient flow for the Willmore functional.} Communications in Analysis and Geometry, (2002),
Volume 10, Number 2, 307-339.
\bibitem[LM89]{LM89} H. Lawson, M.-L. Michelsohn, {\sl Spin Geometry}. Princeton University Press, New Jersey (1989).
\bibitem[LT91]{LT1991} P. Li, L.-F. Tam, {\sl The heat equation and harmonic maps of complete manifolds.} (1991), Invent. Math. 105, 1-46.
\bibitem[Li12]{Li2012} P. Li, {\sl Geometric Analysis}, Cambridge Studies in Advanced Mathematics, vol.134, Cambridge University Press, Cambridge, ISBN978-1-107-02064-1, (2012), x+406 pp.
\bibitem[Sal92]{Sa1992} L. Sallof-Coste, {\sl Uniformly elliptic operators on Riemannian manifolds}, Journal of Differential Geometry, 36, (1992), 417-450.
\bibitem[Sch17]{Sc2017} L. Schiemanowski, {\sl Stability of the spinor flow.} arXiv preprint: 1706.09292, (2017).
\bibitem[Sch18]{Sc2018} L. Schiemanowski, {\sl Blow up criteria for geometric flows on surfaces}, arXiv preprint: 1803.05737, (2018).
\bibitem[Swi93]{Swi93} S. T. Swift, {\sl Natural bundles. II. Spin and the diffeomorphism group}, J. Math. Phys. {\bf 34}(8), (1993), 3825-3840.
\bibitem[Wan91]{Wan91} M. Wang, {\sl Preserving parallel spinors under metric deformations}, Indiana Univ. Math. J. {\bf 40}(3), (1991), 815-844.
\bibitem[WW12]{WW12} H. Weiss, F. Witt, {\sl A heat flow for special metrics}, Adv. Math. {\bf 231}(6), (2012), 3288-3322.
\bibitem[Wit16]{Wit2016} J. Wittmann. {\sl The spinorial energy functional: solutions of the gradient flow on Berger spheres.} Annals of Global Analysis and Geometry, (2016), 49(4): 329-348.
\end{thebibliography}
\end{document}